\numberwithin{equation}{section}
\newtheorem{theorem}{Theorem}[section]
\newtheorem{corollary}[theorem]{Corollary}
\newtheorem{lemma}[theorem]{Lemma}
\newtheorem{proposition}[theorem]{Proposition}
\theoremstyle{remark}
\newtheorem{remark}[theorem]{Remark}
\theoremstyle{definition}
\newcommand{\bp}{\begin{proof}}
\newcommand{\ep}{\end{proof}}
\mathchardef\mhyph="2D
\newcommand{\C}{\mathbb{C}}
\newcommand\Q{\mathbb Q}
\newcommand{\Z}{\mathbb{Z}}
\newcommand\A{\mathcal{A}}
\newcommand{\CC}{\mathcal{C}}
\newcommand{\RR}{\mathcal{R}}
\newcommand{\OO}{\mathcal{O}}
\newcommand{\U}{\mathcal{U}}
\newcommand\g{\mathfrak{g}}
\newcommand\h{\mathfrak{h}}
\newcommand\kk{\mathfrak{k}}
\newcommand\sln{\mathfrak{sl}_n}
\newcommand\Qq{{\Q[q,q^{-1}]}}
\newcommand{\Pol}{\mathcal{O}}
\newcommand\Uq{{U_q{\mathfrak g}}}
\newcommand\Uqres{{U_q^{\mathrm{res}}{\mathfrak g}}}
\newcommand\ad{\operatorname{ad}}
\newcommand\Aut{\operatorname{Aut}}
\newcommand{\End}{\operatorname{End}}
\newcommand\Hilb{\operatorname{Hilb}}
\newcommand\Hom{\operatorname{Hom}}
\newcommand\Rep{\operatorname{Rep}}
\newcommand\Out{\operatorname{Out}}
\newcommand\Tr{\operatorname{Tr}}
\begin{document}

\title{Relations in quantized function algebras}

\date{September 4, 2017}

\author{Pavel Etingof}
\address{P.E.: Department of Mathematics, Massachusetts Institute of Technology,
Cambridge, MA 02139, USA}
\email{etingof@math.mit.edu}

\author{Sergey Neshveyev}
\address{S.N.: Mathematics institute,
University of Oslo,
Moltke Moes vei 35,
0851 Oslo, Norway}
\email{sergeyn@math.uio.no}

\begin{abstract} We develop a method to give presentations of quantized function algebras of complex reductive groups.
In particular, we give presentations of quantized function algebras of automorphism groups of finite dimensional simple complex Lie algebras.
\end{abstract}

\maketitle

\section{Introduction}

A reductive group can often be conveniently defined as the group of symmetries of a given collection of tensors on a finite dimensional vector space $V$. For example, the orthogonal, respectively symplectic, group is the group of symmetries of a symmetric, respectively skew-symmetric, nondegenerate inner product, and the special orthogonal group is the joint symmetry group of the inner product and the volume form. More interesting examples are the group $G_2$, which may be defined as the group of symmetries of the inclusion $V\to V\otimes V$, where $V$ is the $7$-dimensional irreducible representation, and $E_6$, which is the group of symmetries of the invariant cubic form on the $27$-dimensional irreducible representation $V$. Finally, if $\g$ is a simple Lie algebra, then
$\Aut(\g)$ is the group of symmetries of the bracket $\g\otimes \g\to \g$. This gives rise to presentations of the function algebras $\Pol(G)$ of such groups $G$ by generators and relations, where the generators are the matrix coefficients of $V$ or of $V$ and $V^*$, and the relations come from the condition that the given tensors are preserved.

The goal of this paper is to extend such presentations to the $q$-deformed setting. If $q=e^\hbar$, where $\hbar$
is a formal parameter, and we consider formal ($\hbar$-adically complete) quantum deformations $\Pol_q(G)$ of $\Pol(G)$, then such an extension is rather straightforward: we just need to deform the relations so that they are satisfied in the $q$-deformed case, and this will give a presentation of $\Pol_q(G)$ by a standard formal deformation argument. However, if we work over $\C(q)$ or with numerical $q$, this argument does not work and it is not clear why the deformed relations are sufficient to define $\Pol_q(G)$. Showing the sufficiency of the relations, or, in other words, that the algebra they define has at most the expected size, is normally called the ``easy part of the PBW theorem'', but in absence of suitable filtrations, as in our case, it is actually not so easy.

We propose a general method of proving the sufficiency of the deformed relations for generic $q$, i.e., outside of countably many complex values of $q$ or over $\C(q)$. This method is based on an idea from the theory of C$^*$-tensor categories. Namely, we replace the reductive group $G$ with the corresponding compact real form~$K$. Then we show that if $K$ is defined as the group of unitary transformations of $V$ preserving a collection of tensors~$T_i$, then the category $\Rep G=\Rep K$ is generated by $T_i$ and their (Hermitian) adjoints~$T_i^*$, the duality morphisms and their adjoints, and the braiding, i.e., any morphism in this category can be expressed through them. The main idea of proving this is that the Karoubian category generated by all these morphisms is in fact semisimple abelian because of unitarity, and hence by Tannakian formalism must be the category of representations of some closed subgroup $K'\subset U(V)$ containing $K$, and then it is not hard to show that in fact $K'=K$.

This allows us to argue that for generic $q$ the representation category
$\Rep G_q$ of the corresponding quantum group $G_q$ is still generated by appropriate $q$-deformations of the classical morphisms. We then get a presentation of $\Pol_q(G)$ as the Hopf algebra of quantum symmetries of the $q$-deformations of $T_i$ and $T_i^*$ for generic $q$ (and even over $\Q(q)$, if $T_i$ are defined over $\Q$), which is a generalization of the description of quantum groups of classical series due to Faddeev, Reshetikhin and Takhtadzhyan~\cite{FRT}. This presentation gives a somewhat larger set of relations than one wanted, but one can hope that with additional work some of these relations turn out to be redundant.

We implement this strategy in full detail in the case of $G=\Aut(\g)$, where $\g$ is a simple complex Lie algebra. Moreover, we use a certain noncommutative version of Grothendieck's generic freeness theorem, due to Artin, Small and Zhang~\cite{ASZ}, to show that our presentations of quantized function algebras are valid outside of finitely many values of $q$. This, in particular, implies that they are valid for almost all roots of unity.

We expect that these results can be extended without significant changes to many other examples, but leave this outside the scope of the paper.

\smallskip

The organization of the paper is as follows. In Section~\ref{s2} we outline the strategy. In Section~\ref{s3} we prove the main theorem. In Section~\ref{s4} we strengthen it so that it applies to all values of $q$ except finitely many, and give an application to classifying tensor autoequivalences.

\smallskip

{\bf Acknowledgements.} The authors thank Ken Brown and Milen Yakimov for useful discussions. The work of P.E. was partially supported by the NSF grant DMS-1502244. The work of S.N. was partially supported by the ERC grant no.~307663.

\section{The strategy}\label{s2}

Our starting point is the following simple consequence of the
Tannaka--Krein duality.

\begin{theorem}\label{thm:TKu}
Let $V$ be a finite dimensional complex Hilbert space. Consider the standard
representation of the unitary group $U(V)$ on $V$ and the
contragredient representation on the dual Hilbert space $V^*$.
Assume we are given a collection of tensors $T_i\in (V^*)^{\otimes
m_i}\otimes V^{\otimes n_i}$, $i\in I$, and let $K\subset U(V)$ be
the closed subgroup stabilizing all these tensors. Then the monoidal
category $\Rep K$ of finite dimensional complex representations of
$K$ is generated by $V$ and~$V^*$ and the following morphisms:
\begin{itemize}
\item[(a)] the flip $\sigma\colon V\otimes V\to
V\otimes V$;
\item[(b)] the morphisms $r\colon \C\to V^*\otimes V$,
$r(1)=\sum_k e^k\otimes e_k$, and $\bar r\colon\C\to V\otimes V^*$,
$\bar r(1)=\sum_ke_k\otimes e^k$, and their adjoints, where
$\{e_k\}_k$ is a basis in $V$ and $\{e^k\}_k$ is the dual basis in
$V^*$;
\item[(c)] the tensors $T_i$, viewed as morphisms $\C\to (V^*)^{\otimes m_i}\otimes V^{\otimes
n_i}$, and their adjoints.
\end{itemize}
In other words, any irreducible representation of $K$ appears as a
subrepresentation of a tensor product of the representations on $V$
and $V^*$, and any $K$-intertwiner between such tensor products can
be written as a linear combination of compositions of morphisms of
the form $\iota\otimes\dots\otimes\iota\otimes
f\otimes\iota\otimes\dots\otimes\iota$, where $f$ is a morphism from
the above list.
\end{theorem}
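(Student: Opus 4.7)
The plan is to construct, inside $\Rep K$, the smallest monoidal $*$-subcategory $\CC$ whose objects are tensor products of copies of $V$ and $V^*$ and whose morphism spaces are spanned by compositions of maps of the form $\iota\otimes\cdots\otimes\iota\otimes f\otimes\iota\otimes\cdots\otimes\iota$ with $f$ belonging to the listed generating set. Since each listed generator and each of its adjoints lies in $\Hom_K$, $\CC$ is a bona fide subcategory of $\Rep K$. The ambient Hom-spaces are finite dimensional and closed under the Hilbert space adjoint, so $\CC$ acquires the structure of a $C^*$-tensor category; the presence of $\sigma$ together with the duality morphisms $r,\bar r$ and their adjoints ensures that it is rigid and symmetric.

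Next I would pass to the Karoubian (idempotent) completion $\bar\CC$. Each endomorphism algebra in $\bar\CC$ is a finite dimensional $C^*$-algebra and therefore semisimple, so every object of $\bar\CC$ decomposes as a finite direct sum of simples and $\bar\CC$ is a semisimple symmetric $C^*$-tensor category. Composing the embedding $\bar\CC\hookrightarrow\Rep K$ with the forgetful functor to finite dimensional Hilbert spaces gives a symmetric unitary fiber functor on $\bar\CC$. Woronowicz's Tannaka--Krein duality for compact groups then produces a unique closed subgroup $K'\subset U(V)$ with $\bar\CC\simeq\Rep K'$, realized concretely as the group of those unitaries on $V$ whose diagonal tensor power action intertwines every morphism of $\bar\CC$.

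To finish the argument I would compare $K'$ with $K$ in both directions. Every element of $K$ is by hypothesis compatible with each listed generator (it stabilizes the $T_i$ and the duality morphisms tautologically), hence with every composition of generators, hence with every morphism of $\bar\CC$; this gives $K\subset K'$. Conversely, each $T_i$ is by construction a morphism $\un\to(V^*)^{\otimes m_i}\otimes V^{\otimes n_i}$ in $\bar\CC$, so $K'$ must stabilize every $T_i$, and the defining property of $K$ as the full stabilizer yields $K'\subset K$. Hence $K=K'$ and $\bar\CC=\Rep K$, which unpacks into the two assertions of the theorem: every irreducible $K$-representation occurs in some tensor product of $V$ and $V^*$, and every $K$-intertwiner between such tensor products lies in the span of compositions of the listed morphisms.

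The step I expect to demand the most care is the semisimplicity upgrade of $\bar\CC$, and more generally the $C^*$-categorical framework that makes Tannakian reconstruction output a compact group inside $U(V)$ rather than a reductive (super)group inside $GL(V)$. Without including the adjoints of the $T_i$ in the generating set the subcategory produced would typically fail to be semisimple, and the reconstructed $K'$ could be strictly larger than $K$, breaking the inclusion $K'\subset K$. It is precisely the unitarity hypothesis — closure under Hilbert space adjoints — that forces $\CC$ to be a $C^*$-category and thereby forces the reconstruction to land inside $U(V)$, where the stabilizer argument closes the loop.
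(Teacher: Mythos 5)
Your proposal is correct and follows essentially the same route as the paper: generate a concrete symmetric C$^*$-tensor category from the listed morphisms, use closure under Hermitian adjoints and finite-dimensionality to get semisimplicity after Karoubian (and direct-sum) completion, apply Tannaka--Krein to reconstruct a compact group $K'\subset U(V)$, and then identify $K'$ with $K$ by the two stabilizer inclusions. The only cosmetic difference is that the paper phrases the final comparison via equality of Hom-spaces ($\Hom_K=\Hom_H$) rather than equality of groups, and it spells out the small verification that the flips on $V^*\otimes V$, $V\otimes V^*$ and $V^*\otimes V^*$ lie in the generated category, which you elide.
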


We stress that we consider Hermitian adjoints here. Recall also that the Hermitian scalar product on the dual space $V^*$ is defined so that the dual basis for an orthonormal basis in $V$ is orthonormal.

\bp Since any finite dimensional representation of $K$ is
unitarizable, we can equally well prove a similar statement in the
setting of C$^*$-tensor categories (see,
e.g.,~\cite[Chapter~2]{NT}).

Consider the monoidal subcategory of the category $\Hilb_f$ of
finite dimensional Hilbert spaces with objects the tensor products
of the spaces $V$ and $V^*$ (in all possible orders) and morphism
spaces generated by the morphisms in the formulation of the theorem.
Completing this category with respect to subobjects and finite
direct sums we get a C$^*$-tensor category $\CC$. Since $r$ and
$\bar r$ are morphisms in $\CC$, $V^*$ is dual to $V$ in $\CC$.
Hence~$\CC$ is rigid. Next, observe that the flip maps $V^*\otimes
V\to V\otimes V^*$, $V\otimes V^*\to V^*\otimes V$ and $V^*\otimes
V^*\to V^*\otimes V^*$ are morphisms in $\CC$, since they can be
expressed in terms of $\sigma$, $r$, $\bar r$ and their adjoints. It
follows that the flip maps on tensor products of Hilbert spaces
define a unitary symmetry on $\CC$. We have an obvious symmetric
unitary fiber functor $\CC\to\Hilb_f$. Hence, by the Tannaka--Krein
duality, $\CC$ is the category of finite dimensional unitary
representations of a compact group~$H$.

Since $\CC$ is generated as a C$^*$-tensor category by the object
$V$, the representation of $H$ on $V$ is faithful, so~$H$ can be
considered as a subgroup of the unitary group $U(V)$. Since the
morphisms $T_i$ are $H$-intertwiners, we then get $H\subset K$. This
implies that
$$
\Hom_K((V^*)^{\otimes m}\otimes V^{\otimes n},
(V^*)^{\otimes k}\otimes V^{\otimes l})\subset \Hom_H((V^*)^{\otimes
m}\otimes V^{\otimes n},(V^*)^{\otimes k}\otimes V^{\otimes l}).
$$
But by the definition of $\CC=\Rep H$ we also have the opposite
inclusion, so
$$
\Hom_K((V^*)^{\otimes m}\otimes V^{\otimes n},(V^*)^{\otimes
k}\otimes V^{\otimes l})=\Hom_H((V^*)^{\otimes m}\otimes V^{\otimes
n},(V^*)^{\otimes k}\otimes V^{\otimes l}).
$$
Finally, since the representation of $K$ on $V\oplus V^*$ is
faithful and self-dual, any irreducible representation of~$K$
appears as a subrepresentation of $(V\oplus V^*)^{\otimes n}$. \ep

\begin{remark}
1. It is not really necessary to work with C$^*$-categories. The main
point is that any $*$-algebra of operators on a finite dimensional
Hilbert space is semisimple. This implies that the Karoubi envelope
of the category with objects the tensor products of the spaces $V$
and $V^*$ and morphisms as in the proof of the theorem is a rigid
semisimple monoidal category.

2. The tensors $T_i$ can be viewed as morphisms $V^{\otimes m_i}\to V^{\otimes n_i}$, and we could equally well use this interpretation in part (c) of the theorem, since the two ways of considering $T_i$ as morphisms are related by the duality morphisms $r$ and $\bar r$ and their adjoints.

3. While this paper was being written, N. Snyder informed us that a similar result to Theorem \ref{thm:TKu} appears independently in the work in progress by S. Morrison, N. Snyder and D. Thurston.
\end{remark}

A similar result holds for subgroups of compact orthogonal and symplectic groups, or, in other words, for representation categories generated by real and quaternionic representations. Namely, we have the following.

\begin{theorem}\label{thm:TK}
Assume $V$ is a finite dimensional complex Hilbert space with a real or quaternionic structure, so we are given an antilinear isometry $J\colon V\to V$ such that $J^2=1$ or $J^2=-1$.
Assume also that we are given a collection of tensors $T_i\in
(V^*)^{\otimes m_i}\otimes V^{\otimes n_i}$, $i\in I$. Let
$K$ be the compact group of unitary transformations of $V$ stabilizing all these
tensors and commuting with $J$. Then the monoidal category $\Rep K$ of finite dimensional
complex representations of $K$ is generated by $V$ and the following
morphisms:
\begin{itemize}
\item[(a)] the flip $\sigma\colon V\otimes V\to V\otimes V$;
\item[(b)] the morphism $s\colon \C\to V\otimes V$, $s(1)=\sum_k e_k\otimes J
e_k$, and its adjoint, where $\{e_k\}_k$ is an orthonormal basis in $V$;
\item[(c)] the tensors $T_i$, viewed as morphisms $V^{\otimes m_i}\to V^{\otimes
n_i}$, and their adjoints.
\end{itemize}
In other words, any irreducible representation of $K$ appears as a
subrepresentation of $V^{\otimes n}$, and any $K$-intertwiner
$V^{\otimes m}\to V^{\otimes n}$ can be written as a linear
combination of compositions of morphisms of the form
$\iota\otimes\dots\otimes\iota\otimes
f\otimes\iota\otimes\dots\otimes\iota$, where $f$ is a morphism from
the above list.
\end{theorem}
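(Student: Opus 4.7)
The plan is to mimic the proof of Theorem~\ref{thm:TKu}, replacing tensor products of $V$ and $V^*$ by tensor products of $V$ alone, and using the morphism $s$ both to make $V$ self-dual and to encode the antilinear data of $J$. Let $\CC$ denote the C$^*$-tensor category obtained by completing, under subobjects and finite direct sums, the monoidal subcategory of $\Hilb_f$ whose objects are the tensor powers $V^{\otimes n}$ and whose morphism spaces are generated by the morphisms listed in~(a), (b), (c).

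The first step is rigidity. A short computation in an orthonormal basis, using only that $J$ is an antilinear isometry with $J^2=\pm1$, yields
\[
(s^*\otimes\iota_V)(\iota_V\otimes s) = \epsilon\,\iota_V,\qquad \epsilon := J^2\in\{\pm1\},
\]
together with the analogous identity for the other snake. Rescaling $s^*$ by $\epsilon$ gives honest duality data, so $V$ is self-dual in $\CC$ and $\CC$ is rigid. The flip $\sigma$ on $V\otimes V$ extends to a unitary symmetric braiding on $\CC$, and the inclusion $\CC\hookrightarrow\Hilb_f$ is a unitary symmetric fiber functor. By Tannaka--Krein, $\CC=\Rep H$ for a unique compact group $H$ acting faithfully on $V$, and hence $H\subset U(V)$.

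The key new input, and the main obstacle, is to show that $H$ commutes with $J$. Since $s\in\Hom_\CC(\C,V\otimes V)=\Hom_H(\C,V\otimes V)$, the vector $s(1)=\sum_k e_k\otimes Je_k$ is $H$-invariant. A direct computation using a unitary change of basis shows that this vector depends only on $V$ and $J$, not on the choice of orthonormal basis. Hence, for $g\in H\subset U(V)$, setting $f_k:=ge_k$ rewrites the $H$-invariance of $s(1)$ as
\[
\sum_k f_k\otimes (gJg^{-1})f_k = \sum_k f_k\otimes Jf_k,
\]
and applying basis-independence to the antilinear isometry $gJg^{-1}$ forces $gJg^{-1}=J$. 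Since moreover $H$ preserves every $T_i$ (as $T_i\in\Hom_\CC=\Hom_H$), we obtain $H\subset K$. All generators of $\Hom_\CC$ are also $K$-intertwiners, so the sandwich
\[
\Hom_\CC \;\subset\; \Hom_K \;\subset\; \Hom_H \;=\; \Hom_\CC
\]
forces $\Hom_K=\Hom_H$; combined with the fact that $V$ is a faithful self-dual $K$-representation, Tannaka--Krein yields $K=H$, and every irreducible $K$-representation then appears in some $V^{\otimes n}$ by construction of $\CC$.
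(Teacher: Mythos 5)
Your proposal is correct and follows essentially the same route as the paper, which proves Theorem~\ref{thm:TK} by the same reduction to the Tannaka--Krein argument of the unitary case; you have merely filled in the details the paper leaves implicit (the snake identities for $s$ up to the sign $J^2=\pm1$, and the basis-independence argument showing that $H$-invariance of $s(1)$ forces $gJg^{-1}=J$). No gaps.
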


\bp The proof is similar to that in the unitary case. Briefly, the
Karoubi envelope of the category with objects $V^{\otimes n}$ and
morphism spaces generated by the morphisms in the formulation of the
theorem is the representation category of a closed subgroup
$H\subset U(V)$. Since $s$ is an $H$-intertwiner, the elements
of~$H$ commute with $J$. As in the unitary case, we then conclude
that the spaces of $K$- and $H$-intertwiners coincide. \ep

Theorems~\ref{thm:TKu} and~\ref{thm:TK} allow us, in principle, to
find different generators of representation categories of compact
Lie groups, or, equivalently, of reductive complex Lie groups. In fact, it follows from~\cite[Theorem~15.1]{GG} that if
we start with a compact connected simple Lie group $K$ and a
faithful irreducible unitary representation of $K$ on $V$, then,
apart from a few cases, we can find a tensor $T\in S^kV^*$ such that
$K$ coincides with the connected component of the identity of the
stabilizer of $T$ in $U(V)$. For some exceptional Lie groups, their
realizations as stabilizers of homogeneous polynomials on
representations of small dimensions can be found in~\cite{SV}. For
example, the compact simply connected group of type $E_6$ has
dimension $78$, while its smallest nontrivial irreducible
representation $V$ has dimension~$27$. It is known that there is a
unique up to a scalar factor homogeneous cubic  polynomial on $V$
invariant under $E_6$, and then $E_6$ is exactly the group of
unitary transformations stabilizing this polynomial.

\smallskip

Similar descriptions of representation categories can then be obtained for the $q$-deformations of reductive complex Lie groups $G$, at least for generic values of the deformation parameter or when $q$ is an indeterminate. For this we have to deform morphisms of $G$-modules. Let us explain, or rather remind, in detail how to do this.

In order to simplify the discussion, let us consider only the simply connected semisimple case. So, let~$G$ be a simply connected semisimple complex Lie group with Lie algebra $\g$. Fix a maximal torus and a system of simple roots. Consider the quantized universal enveloping algebra $\Uq$ over~$\Q(q)$, where $q$ is an indeterminate. Denote by $\Uqres\subset\Uq$
Lusztig's restricted integral form, but with the scalars extended
from $\Z[q,q^{-1}]$ to $\Qq$, see~\cite{L}
or~\cite[Section~9.3]{CP}.

Recall that a representation of $\Uq$ over $\Q(q)$ is called
admissible, or of type $1$, if it decomposes into a direct sum of
weight spaces.
Every finite dimensional admissible $\Uq$-module admits a
$\Uqres$-stable $\Qq$-lattice~$M$. Then on the specialization
$M_1=M/(q-1)M$ of $M$ to $q=1$ we get a representation of the
rational form $\g_\Q$ of $\g$ generated over~$\Q$ by the Chevalley generators of $\g$. Up to isomorphism this module does not depend on the
choice of~$M$. If $V$ is simple, with a highest weight vector $\xi$
of weight $\lambda$, then taking $M=(\Uqres)\xi$ we get that $M_1$
is also irreducible, with highest weight $\lambda$. It follows that
for arbitrary~$V$ and~$M$, the $\Uq$-module~$V$ decomposes into
simple highest weight modules in the same way as the
$\g_\Q$-module~$M_1$ does.

\begin{lemma} \label{lem:lift}
Let $V'$ and $V''$ be finite dimensional admissible $\Uq$-modules,
$M'\subset V'$ and $M''\subset V''$ be $\Uqres$-stable
$\Qq$-lattices. Then any morphism $M'_1\to M_1''$ of $\g_\Q$-modules
lifts to a morphism $M'\to M''$ of $\Uqres$-modules.
\end{lemma}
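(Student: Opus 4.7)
The plan is to prove surjectivity of the reduction map by a free-module rank comparison. Set $H:=\Hom_{\Uqres}(M',M'')$. Since $M'$ is a finitely generated $\Qq$-module and $\Qq$ is a principal ideal domain, $\Hom_{\Qq}(M',M'')$ is a free $\Qq$-module of finite rank, and $H$ sits inside it as a submodule, hence is itself free of some finite rank $r$. My aim is to show that the natural $\Q$-linear reduction map
\[
\phi\colon H/(q-1)H\longrightarrow \Hom_{\g_\Q}(M'_1,M''_1)
\]
is an isomorphism. It is well defined because in Lusztig's integral form the Chevalley generators of $\g_\Q$ arise as specializations of elements of $\Uqres$ modulo $q-1$, so a $\Uqres$-linear map automatically descends to a $\g_\Q$-linear map. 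I then need to check that the two sides have equal dimension $r$ over $\Q$ and that $\phi$ is injective.

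For the dimension count, a clearing-of-denominators argument shows that the natural map $H\otimes_{\Qq}\Q(q)\to\Hom_{\Uq}(V',V'')$ is an isomorphism: any $\Uq$-morphism $V'\to V''$ can be multiplied by a suitable element of $\Qq$ (a common denominator for the images of a finite generating set of $M'$) so as to carry $M'$ into $M''$, and injectivity on the lattice side is obvious. Hence $r=\dim_{\Q(q)}\Hom_{\Uq}(V',V'')$. By the discussion preceding the lemma, $V'$ and $M'_1$ have matching decompositions into simple highest weight modules, and likewise for $V''$ and $M''_1$; Schur's lemma then yields $\dim_{\Q(q)}\Hom_{\Uq}(V',V'')=\dim_\Q\Hom_{\g_\Q}(M'_1,M''_1)$, so both source and target of $\phi$ have dimension $r$.

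For the injectivity, suppose $f\in H$ satisfies $f(M')\subseteq(q-1)M''$. Since $M''$ embeds in the $\Q(q)$-vector space $V''$, the formula $g(m):=f(m)/(q-1)$ defines a $\Qq$-linear map $g\colon M'\to M''$. From $(q-1)\bigl(xg(m)-g(xm)\bigr)=xf(m)-f(xm)=0$ for $x\in\Uqres$ and $m\in M'$, together with the torsion-freeness of $M''$ over $\Qq$, one concludes that $g$ is $\Uqres$-linear. Thus $f=(q-1)g\in(q-1)H$, so $\phi$ is injective and therefore an isomorphism. The only real subtlety is the identification $H\otimes_{\Qq}\Q(q)\cong\Hom_{\Uq}(V',V'')$ and the well-definedness of $\phi$; everything else reduces to a standard argument over the PID $\Qq$.
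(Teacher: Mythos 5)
Your proposal is correct and follows essentially the same route as the paper: both arguments establish that $H=\Hom_{\Uqres}(M',M'')$ is free of rank $r=\dim_{\Q(q)}\Hom_{\Uq}(V',V'')=\dim_\Q\Hom_{\g_\Q}(M_1',M_1'')$ by clearing denominators and comparing decompositions, and then show that the reduction modulo $q-1$ is injective because any $f$ with $f(M')\subseteq(q-1)M''$ is divisible by $q-1$ in $H$. The paper phrases this last step as linear independence of the specializations of a $\Qq$-basis of $H$, which is the same statement as your injectivity of $\phi$ on $H/(q-1)H$.
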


\bp The $\Qq$-module $\Hom_{\Uqres}(M',M'')$, being a submodule of a
free finite rank module, is itself free and of finite rank. Since
any morphism $V'\to V''$ of $\Uq$-modules can be multiplied by a
nonzero element of $\Qq$ to define a morphism $M'\to M''$ of
$\Uqres$-modules, the rank is equal to $
\dim_{\Q(q)}\Hom_{\Uq}(V',V'')=\dim_{\Q}\Hom_{\g_\Q}(M_1',M_1'').$
Therefore it suffices to show that if morphisms $T_i$, $1\le i\le
n$, form a $\Qq$-basis in $\Hom_{\Uqres}(M',M'')$, then their
specializations $T_{i1}\colon M_1'\to M_1''$ are linearly
independent over $\Q$.

Assume $\sum_i a_iT_{i1}=0$ for some numbers $a_i\in\Q$ not all of
which are zero. Then the specialization of $T=\sum_i a_iT_i$ to
$q=1$ is zero, so that $TM'\subset (q-1)M''$. But this implies that
$(q-1)^{-1}T$ defines a morphism $M'\to M''$, which contradicts the
assumption that the morphisms $T_i$ form a $\Qq$-basis in
$\Hom_{\Uqres}(M',M'')$. \ep

Assume now that $M_1$ is a $\g_\Q$-module which integrates to a
faithful representation of~$G$. Assume also that a compact form of $G$ is
the group of unitary transformations of $\C\otimes_\Q M_1$ (with
respect to a scalar product which is rational on $M_1$) commuting
with operators $1\otimes T_i\colon\C\otimes_\Q M_1^{\otimes m_i}\to
\C\otimes_\Q M_1^{\otimes n_i}$. Take any admissible
$\Uq$-module~$V$ with a $\Uqres$-stable $\Qq$-lattice $M$ such that its specialization to $q=1$ gives $M_1$. Then, using Lemma \ref{lem:lift}, we can
lift the morphisms $T_i$ to morphisms $T_{iq}\colon M^{\otimes
m_i}\to M^{\otimes n_i}$ of $\Uqres$-modules. Similarly for their
adjoints, as well as for the morphisms $r$, $\bar r$ and $\sigma$
(or more precisely, their analogues over $\Q$ for the space $M_1$)
and their adjoints.

Theorem~\ref{thm:TKu} implies that each space
$\Hom_{\g_\Q}(M_1^{\otimes m},M^{\otimes n}_1)$ has a basis over
$\Q$ consisting of compositions of morphisms
$\iota\otimes\dots\otimes\iota\otimes
f\otimes\iota\otimes\dots\otimes\iota$, where $f$ is one of the
morphisms $T_i$, $r$, $\bar r$, $\sigma$ or their adjoints. The same
compositions of morphisms with $f$ replaced by the corresponding
lifts $f_q$ to morphisms of $\Uqres$-modules are easily seen to be
linearly independent over $\Q(q)$. Since the admissible
$\Uq$-modules have the same fusion rules as in the classical case,
we conclude that these compositions span $\Hom_{\Uq}(V^{\otimes
m},V^{\otimes n})$ over $\Q(q)$. We thus see that the $\Q(q)$-linear
monoidal category of finite dimensional admissible $\Uq$-modules is
generated by $V$ and $V^*$ and the lifts of $T_i$, $r$, $\bar r$,
$\sigma$ and their adjoints.

We can also specialize the lifts of morphisms to $q\ne0$. At least for transcendental $q$ we then get generators of the category of complex finite dimensional admissible representations of quantized universal enveloping algebras.

If we know generators of the category of finite dimensional
admissible $\Uq$-modules, then, in turn, we can get generators and a
complete set of relations for the quantized algebra of regular
functions on $G$, since this algebra is the coend of the forgetful
fiber functor, and therefore it is generated by the matrix coefficients
of any generating set of modules and the relations are given by any
generating set of morphisms, see, e.g.,~\cite[Corollary~2.3.13]{Sc}.

\section{The main theorem}\label{s3}

In this section, following the strategy described above, we will find a particular presentation of quantized function algebras of automorphism groups of simple complex Lie algebras.

Let $G$ be a connected simple complex Lie group of adjoint type and
$\g$ be its Lie algebra. Fix a Cartan subalgebra $\h\subset\g$ and a system $(\alpha_k)_k$ of simple roots. Let $B\in\g^*\otimes\g^*$ be the
standard $\ad$-invariant symmetric form on $\g$, so that
$B(h_\alpha,h_\alpha)=2$ for every short root~$\alpha$, where
$h_\alpha\in \h$ is defined by $B(h_\alpha,h)=\alpha(h)$ for
$h\in\h$. Denote by $t\in\g\otimes\g$ the corresponding invariant
tensor.

As a compact form of $\g$ we take the real Lie algebra $\kk$ generated by the elements $ih_k$, $i(e_k+f_k)$, $e_k-f_k$, where $h_k,e_k,f_k$ are the Chevalley generators of $\g$, and denote by $K\subset G$ the corresponding compact form of $G$.
The form $B$ is negative definite on $\kk$, so $-B|_{\kk}$ extends to a positive definite Hermitian scalar product
on~$\g$. Then the adjoint of the morphism $B\colon\g\otimes\g\to\C$
is given by $B^*(1)=t$.

Next, consider the group $\Gamma$ of automorphisms of the Dynkin
diagram of $\g$. It acts by $B$-preserving automorphisms on~$\g$.
Together with the adjoint representation of $G$ this gives us a
faithful representation of the semidirect product $G\rtimes\Gamma$
on $\g$. Since $\Out(K)\cong\Gamma$, it follows that we can identify
the compact form $K\rtimes \Gamma$ of $G\rtimes\Gamma$ with the subgroup of the orthogonal group
$O(\kk,-B|_{\kk})$ stabilizing the Lie bracket
$L\in\g^*\otimes\g^*\otimes\g$. By Theorem~\ref{thm:TK} we therefore
get the following result.

\begin{proposition} \label{prop:TKad}
The monoidal category of finite dimensional complex representations
of $G\rtimes\Gamma$ is generated by $\g$ and the following
morphisms: the flip $\sigma\colon\g\otimes\g\to\g\otimes\g$,
$B\colon\g\otimes\g\to\C$, $t\colon\C\to\g\otimes\g$,
$L\colon\g\otimes\g\to\g$ and $L^*\colon\g\to\g\otimes\g$.
\end{proposition}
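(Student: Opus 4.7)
The plan is to deduce the proposition by a direct application of Theorem~\ref{thm:TK}. Take $V=\g$ equipped with the Hermitian inner product extending $-B|_{\kk}$, and let $J\colon\g\to\g$ be the antilinear involution given by complex conjugation with respect to~$\kk$, so that $J^2=1$ (the real case). The paragraph preceding the proposition has already identified $K\rtimes\Gamma$ as the subgroup of $U(\g)$ commuting with $J$ (equivalently, the subgroup of $O(\kk,-B|_{\kk})$) that stabilizes the Lie bracket $L$. This is exactly the type of group to which Theorem~\ref{thm:TK} applies, with $L$ playing the role of the single tensor~$T_1$.

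The only point of comparison that requires checking is the match between the morphism $s\colon\C\to\g\otimes\g$ from Theorem~\ref{thm:TK} and the tensor $t\colon\C\to\g\otimes\g$ appearing in the proposition. Picking an orthonormal basis $\{e_k\}$ of $(\kk,-B|_{\kk})$ gives an orthonormal basis of $\g$ consisting of $J$-fixed vectors, so $s(1)=\sum_k e_k\otimes Je_k=\sum_k e_k\otimes e_k$. In the same basis $B(e_k,e_l)=-\delta_{kl}$, and unravelling the definition of the Hermitian adjoint yields $t=B^*(1)=-\sum_k e_k\otimes e_k=-s(1)$. Hence $s$ and $t$ differ by a sign, and likewise $s^*$ and $B$, so each pair spans the same line in the corresponding morphism space.

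With these identifications, Theorem~\ref{thm:TK} applied to the single tensor $L$ (and its adjoint $L^*$) implies that $\Rep(K\rtimes\Gamma)$ is generated by~$\g$ together with $\sigma$, $B$, $t$, $L$, and $L^*$. Since $K\rtimes\Gamma$ is a compact real form of $G\rtimes\Gamma$, its finite dimensional complex representations are in natural bijection with those of $G\rtimes\Gamma$, and the proposition follows. There is no real obstacle here: all of the substantive work is already contained in Theorem~\ref{thm:TK} and in the preceding identification of $K\rtimes\Gamma$ as an orthogonal stabilizer, and the only residual step is the short basis-level computation above matching the pair $(B,t)$ with $(s^*,s)$.
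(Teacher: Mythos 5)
Your proposal is correct and follows exactly the paper's route: the paper likewise obtains the proposition as an immediate application of Theorem~\ref{thm:TK}, using the preceding identification of $K\rtimes\Gamma$ with the subgroup of $O(\kk,-B|_{\kk})$ stabilizing the bracket $L$. Your extra basis-level check that $s=-t$ and $s^*=-B$ (so each pair spans the same line of morphisms) is a correct and welcome verification that the paper leaves implicit.
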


Let $V$ be a finite dimensional admissible $\Uq$-module which is a
deformation of the adjoint representation of $\g_\Q$, that is, it is
a simple module with highest weight equal to the maximal root
$\alpha_{\mathrm{max}}$. Fix a highest weight vector $\xi$ and put
$M=(\Uqres)\xi$. We identify $M_1$ with $\g_\Q$.

The group~$\Gamma$ acts on $\Uq$ by automorphisms $\alpha_\gamma$,
$\gamma\in\Gamma$, permuting the generators. We have a unique
representation of $\Gamma$ on $V$ which respects this action and such that its
specialization to $q=1$ gives the action of $\Gamma$ on $\g_\Q$. Namely,
this representation is given by $\gamma
(X\xi)=\chi(\gamma)\alpha_\gamma(X)\xi$ for $X\in\Uq$, where
$\chi\colon\Gamma\to\{-1,1\}$ is the same character as in the
classical case.

Now we can use Lemma~\ref{lem:lift} to lift the Lie bracket
$L|_{\g_\Q}\colon\g_\Q\otimes\g_\Q\to\g_\Q$ to a morphism
$$
L_q\colon M\otimes_{\Qq} M\to M
$$
of $\Uqres$-modules and then, by averaging over $\Gamma$, assume that $L_q$ is $\Gamma$-equivariant. The following shows that this determines $L_q$ in an essentially unique way.

\begin{lemma}
The Lie bracket is the unique up to a scalar factor $\Gamma$-equivariant morphism $\g\otimes\g\to\g$ of $\g$-modules.
\end{lemma}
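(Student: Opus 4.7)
The plan is to show $\dim\Hom_\g(\g\otimes\g,\g)^\Gamma=1$. I would start by decomposing $\g\otimes\g=S^2\g\oplus\Lambda^2\g$ as $\g$- and $\Gamma$-submodules and handle the two summands separately. The Lie bracket $L$ lies in the antisymmetric summand and is both $\g$-equivariant (tautologically) and $\Gamma$-equivariant (since $\Gamma$ acts on $\g$ by Lie algebra automorphisms). Uniqueness inside the antisymmetric summand follows from the classical fact that for simple $\g$ the multiplicity of $\g$ in $\Lambda^2\g$ equals $1$: dualizing via the invariant form $B$ reduces this to the statement that the space of ad-invariant trilinear forms on $\g$ antisymmetric in two arguments is one-dimensional, spanned by the Cartan $3$-form $(X,Y,Z)\mapsto B([X,Y],Z)$.

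For the symmetric summand I would similarly use $B$ to identify $\Hom_\g(S^2\g,\g)$ with the degree-$3$ part $(S^3\g)^\g$ of the invariant polynomial algebra of $\g$. By the classical description of fundamental degrees, this space is nonzero precisely when $\g\cong\mathfrak{sl}_n$ with $n\ge 3$, in which case it is one-dimensional and spanned by the symmetric traceless product $d(X,Y)=XY+YX-\tfrac{2}{n}\Tr(XY)I$. For all other simple $\g$ (including $\mathfrak{sl}_2$), the entire space $\Hom_\g(\g\otimes\g,\g)$ is one-dimensional and spanned by $L$, and the lemma is immediate.

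The remaining case is $\g=\mathfrak{sl}_n$ with $n\ge3$ and $\Gamma=\Z/2$. In a matrix realization where the nontrivial element $\gamma\in\Gamma$ acts (up to an inner automorphism fixing the pinning) as $X\mapsto -X^T$, one computes directly that $\gamma L(X,Y)=L(\gamma X,\gamma Y)$ whereas $\gamma d(X,Y)=-d(\gamma X,\gamma Y)$, so $d$ transforms by the nontrivial sign character of $\Gamma$ while $L$ is genuinely $\Gamma$-equivariant. Hence the $\Gamma$-invariants of $\Hom_\g(\g\otimes\g,\g)$ form a one-dimensional subspace spanned by $L$, as claimed. The main step that is not entirely formal is the dimension count for $(S^3\g)^\g$, which relies on the Chevalley classification of fundamental degrees; everything else is a direct calculation.
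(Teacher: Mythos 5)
Your argument is correct, and its core is the same as the paper's: everything reduces to the facts that the multiplicity of $\g$ in $\g\otimes\g$ is $1$ unless $\g\cong\sln(\C)$ with $n\ge3$, that in that case the extra copy sits in $S^2\g$ and corresponds to the cubic invariant $\Tr(X^3)$, and that this cubic form is anti-invariant under $X\mapsto -X^t$, so the extra morphism $S^2\g\to\g$ is not $\Gamma$-equivariant. The only real difference is in how the multiplicity input is obtained: the paper simply cites King--Wybourne, whereas you derive it from the splitting $\g\otimes\g=S^2\g\oplus\Lambda^2\g$, the Cartan $3$-form, and the fundamental degrees. That derivation has one silent assumption worth flagging. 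Dualizing with $B$ identifies $\Hom_\g(S^2\g,\g)$ with $(S^2\g^*\otimes\g^*)^\g$, not with $(S^3\g^*)^\g$; since $S^2\g\otimes\g\cong S^3\g\oplus \mathbb{S}_{(2,1)}\g$ and $\Lambda^2\g\otimes\g\cong\Lambda^3\g\oplus\mathbb{S}_{(2,1)}\g$, the equality $\dim\Hom_\g(S^2\g,\g)=\dim(S^3\g)^\g$ --- and likewise your claim that every invariant trilinear form antisymmetric in two arguments is proportional to the fully antisymmetric Cartan form --- requires the vanishing of the $\g$-invariants in the mixed-symmetry summand $\mathbb{S}_{(2,1)}\g$ of $\g^{\otimes 3}$. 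This vanishing does hold for simple $\g$, but it is precisely the kind of multiplicity statement you set out to avoid citing, so you should either justify it or instead quote the multiplicities of $\g$ in $S^2\g$ and in $\Lambda^2\g$ directly, as the paper does via \cite{KW}. The final step (the sign computation for $d(X,Y)=XY+YX-\tfrac{2}{n}\Tr(XY)I$ under $X\mapsto -X^T$) is exactly the paper's argument and is fine.
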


\bp Recall that we assume that $\g$ is simple. It is known then,
see, e.g.,~\cite{KW} for a more general statement, that if $\g\not\cong\sln(\C)$ for $n\ge3$, then
$\g\otimes\g$ contains a single copy of $\g$, so in this case we do
not even need the $\Gamma$-equivariance to get uniqueness. On the other hand, if
$\g=\sln(\C)$ for some $n\ge3$, then the multiplicity of $\g$ in
$\g\otimes\g$ equals $2$: there is an extra copy of $\g$ in $S^2\g$.
Namely, a morphism $S^2\g\to\g$, or equivalently, a pairing between
$S^2\g$ and $\g$, comes from the nonzero invariant cubic form
$\Tr(X^3)$ on $\g=\sln(\C)$. Up to an inner automorphism, the only
nontrivial outer automorphism of $\sln(\C)$ is given by $X\mapsto
-X^t$. Since the cubic form is anti-invariant with respect to this
automorphism, we see that the corresponding morphism $S^2\g\to\g$ is
not $\Gamma$-equivariant. \ep

It follows that the space of $\Gamma$-equivariant morphisms
$V\otimes_{\Q(q)}V\to V$ of $\Uq$-modules is one-dimensional.  Hence
to get~$L_q$ we just have to take any such nonzero morphism and
multiply it by an element of $\Q(q)$ so that it maps $M\otimes_\Qq
M$ into $M$ and specializes to a nonzero morphism at $q=1$, and then
rescale it by a rational number to get the right specialization at
$q=1$. Furthermore, we may require $L_q$ to be indivisible in the
free rank one $\Qq$-module
$\Hom_{(\Uqres)\rtimes\Gamma}(M\otimes_{\Qq}M,M)$. This determines
$L_q$ uniquely up to a factor~$q^n$, $n\in\Z$.

\begin{remark}
Instead of the $\Gamma$-equivariance we could equivalently require $L_q$ to be zero on the quantum symmetric tensors. Namely, assume $\g=\sln(\C)$ for some $n\ge3$, which is the only case we have to take care of. Then on the isotypic component of $V\otimes_{\Q(q)}V$ corresponding to $V$ the braiding defined by the $R$-matrix has eigenvalues $\pm q^{-n}$ or $\pm q^n$, depending on which of the two standard $R$-matrices we take, see~~\cite[Corollary~23 in Section~8.4.3]{KS}. Then up to a scalar factor the morphism $L_q$ of $\Uq$-modules is specified by requiring it to kill the eigenvectors of the braiding with eigenvalue $q^{-n}$ or $q^{n}$.
Note also that explicit formulas for $L_q$ are known, see, e.g.,~\cite{DHGZ}.
\end{remark}

Similarly, we lift $B|_{\g_\Q}\colon \g_\Q\otimes\g_\Q\to\Q$ to a morphism
$$
B_q\colon M\otimes_{\Qq}M\to\Qq
$$
of $\Uqres$-modules. Since $V$ is simple and self-dual, the space of morphisms
$V\otimes_{\Q(q)}V\to\Q(q)$ is one-dimensional. Hence, again,~$B_q$ is unique up to a scalar factor, and if we  require $B_q$ to be indivisible in the free rank
one $\Qq$-module $\Hom_{\Uqres}(M\otimes_{\Qq}M,\Qq)$, then $B_q$ is unique up to a factor~$q^n$, $n\in\Z$. Clearly, $B_q$ is also $\Gamma$-equivariant.

In a similar way we can define $\Gamma$-equivariant lifts
$$
B'_q\colon \Qq\to M\otimes_\Qq M\ \ \text{and}\ \ L'_q\colon M\to M\otimes_\Qq M
$$
of $B^*|_\Q$ and $L^*|_{\g_\Q}$. Note that up to a factor, the morphism $B'_q$ is given by $1\mapsto\sum_k x^k\otimes
x_k$, where $(x_k)_k$ is a basis in $V$ and $(x^k)_k$ is the dual
basis: $B_q(x_k,x^l)=\delta_{kl}$.

\medskip

Next, consider the quantized algebra $\Pol_q(G)$ of regular
functions on $G$ over $\Q(q)$. By this we mean the algebra of matrix
coefficients of finite dimensional admissible $\Uq$-modules with
weights in the root lattice.~Put
$$
\Pol_q(G\rtimes\Gamma)=\Pol_q(G)\otimes_\Q\Pol(\Gamma;\Q),
$$
where $\Pol(\Gamma;\Q)$ is the algebra of $\Q$-valued functions on
$\Gamma$. Equivalently, $\Pol_q(G\rtimes\Gamma)$ is the algebra of
matrix coefficients of finite dimensional representations of
$\Uq\rtimes\Gamma$ over $\Q(q)$ such that their restrictions to
$\Uq$ have only weights in the root lattice. Denote by $\CC_q$ the
$\Q(q)$-linear monoidal category of such representations.

\begin{proposition}\label{prop:TKad2}
The monoidal category $\CC_q$ is semisimple and braided, with
braiding defined by the universal $R$-matrix ${\mathcal R}_q$ of
$\Uq$. Denote by $R_q$ the image of ${\mathcal R}_q$ under the
representation on $V\otimes_{\Q(q)}V$. Then $\CC_q$ is generated by
$V$ and the following morphisms: $\sigma R_q\colon V\otimes_{\Q(q)}
V\to V\otimes_{\Q(q)} V$, $B_q\colon V\otimes_{\Q(q)} V\to\Q(q)$,
$B'_q\colon \Q(q)\to V\otimes_{\Q(q)} V$, $L_q\colon
V\otimes_{\Q(q)} V\to V$, and $L'_q\colon V\to V\otimes_{\Q(q)}V$.
\end{proposition}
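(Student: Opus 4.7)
The plan is to apply the strategy sketched at the end of Section~\ref{s2}, combining the classical generation result (Proposition~\ref{prop:TKad}) with the lifting Lemma~\ref{lem:lift}, adapted to the $\Gamma$-equivariant setting.

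First I would dispose of the easy assertions. Semisimplicity of $\CC_q$ is standard for admissible $\Uq$-modules over $\Q(q)$ and is preserved by adjoining the finite group $\Gamma$, which acts by Hopf algebra automorphisms. The braiding is given by $\sigma\mathcal{R}_q$; to see that it extends from $\Rep\Uq$ to $\CC_q$ I would check that $\Gamma$ preserves $\mathcal{R}_q$, which is immediate because $\Gamma$ permutes the Chevalley generators compatibly with the defining product formula for the universal $R$-matrix. In particular, $\sigma R_q$ really is a morphism in $\CC_q$.

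For the generation claim, the main step, I would proceed as follows. Using the second point of the remark following Theorem~\ref{thm:TKu}, I strengthen Proposition~\ref{prop:TKad} to the statement that each Hom space $\Hom_{\g_\Q\rtimes\Gamma}(\g_\Q^{\otimes m},\g_\Q^{\otimes n})$ is $\Q$-spanned by compositions $\iota\otimes\dots\otimes f\otimes\dots\otimes\iota$ with $f\in\{\sigma,B,B^*,L,L^*\}$. Next I would extend Lemma~\ref{lem:lift} from $\Uqres$-modules to $(\Uqres)\rtimes\Gamma$-modules; the proof goes through verbatim since $\Gamma$-averaging preserves the $\Qq$-lattice structure. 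Each of $B_q,B'_q,L_q,L'_q$ has already been constructed above as a $\Gamma$-equivariant lift of the corresponding classical morphism, and since $\mathcal{R}_q$ specializes to $1\otimes1$ at $q=1$, the morphism $\sigma R_q$ specializes to $\sigma$. Thus all five quantum generators specialize to their classical counterparts.

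Having this, I would form the analogous quantum compositions in $\Hom_{(\Uqres)\rtimes\Gamma}(M^{\otimes m},M^{\otimes n})$. Their $q=1$ specializations span $\Hom_{\g_\Q\rtimes\Gamma}(M_1^{\otimes m},M_1^{\otimes n})$ over $\Q$ by the classical result, and the specialization-versus-divisibility argument used in the proof of Lemma~\ref{lem:lift} then shows that, after picking a maximal classically independent subset, the quantum compositions are $\Q(q)$-linearly independent. Since admissible $\Uq$-modules satisfy the same fusion rules as $\g_\Q$-modules, the two Hom spaces have equal dimension, so these compositions also span $\Hom_{\Uq\rtimes\Gamma}(V^{\otimes m},V^{\otimes n})$ over $\Q(q)$. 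Every simple object of $\CC_q$ appears as a subobject of some $V^{\otimes n}$ because the corresponding statement holds classically (by Proposition~\ref{prop:TKad}, since $\g$ is faithful as a $G\rtimes\Gamma$-module) and the fusion rules agree. The main obstacle I anticipate is essentially bookkeeping: maintaining $\Gamma$-equivariance of every lift, ensuring the semidirect-product version of Lemma~\ref{lem:lift} really does go through unchanged, and being careful that the quantum analogue of the flip has to be $\sigma R_q$ rather than $\sigma$ alone---this is the one place where the structural difference between the classical and $q$-deformed categories enters. Once these points are verified, the core reduces to the same linear algebra already described in Section~\ref{s2}.
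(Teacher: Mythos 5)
Your overall route is the same as the paper's: Clifford--Mackey for semisimplicity, $\Gamma$-invariance of $\mathcal{R}_q$ for the braiding, and then the Section~\ref{s2} template (lift the classical generators, show the lifted compositions are linearly independent by specialization, and conclude they span by a dimension count). However, there is one genuine gap, and it is precisely the point where the crossed-product setting differs from the plain $\Uq$ case. Your spanning argument rests on the assertion that ``the two Hom spaces have equal dimension'' because ``the fusion rules agree,'' but for $\Uq\rtimes\Gamma$ over $\Q(q)$ versus $U(\g_\Q)\rtimes\Gamma$ over $\Q$ this is not automatic: the simple objects on each side are built from irreducible representations of the stabilizers $\Gamma_\lambda$ over the respective (non-algebraically-closed) base fields, and a priori an irreducible representation of $\Gamma_\lambda$ over $\Q(q)$ need not come from one over $\Q$, which would break the matching of simple objects and hence of Hom-space dimensions. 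The paper closes this by showing every irreducible representation of $\Gamma_\lambda$ over $\Q(q)$ has a unique rational form --- either by noting that the only nontrivial stabilizers occurring are $S_2$ and $S_3$, or by the general fact that a finite dimensional division algebra over $\Q$ remains a division algebra after extending scalars to $\Q(q)$. Without some such argument, the equality
$$
\dim_{\Q(q)}\Hom_{\Uq\rtimes\Gamma}(V^{\otimes m},V^{\otimes n})
=\dim_\Q\Hom_{U(\g_\Q)\rtimes\Gamma}(M_1^{\otimes m},M_1^{\otimes n}),
$$
which you need both for spanning and for your claimed crossed-product extension of Lemma~\ref{lem:lift}, is unjustified. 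The rest of your outline (linear independence of the lifted compositions via specialization at $q=1$, embedding of every simple of $\CC_q$ into some $V^{\otimes n}$ once the parametrization of simples is matched, and the identification of $\sigma R_q$ as the correct lift of the flip) is sound and matches the paper.
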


\bp The first statement follows from semisimplicity of the category
of finite dimensional admissible $\Uq$-modules using the standard
Clifford--Mackey type analysis of representations of crossed
products. Furthermore, the irreducible finite dimensional admissible
representations of $\Uq\rtimes\Gamma$ are classified by the
$\Gamma$-orbits of pairs $(\lambda,\pi)$, where $\lambda$ is a
dominant integral weight and $\pi\colon \Gamma_\lambda\to GL(H_\pi)$
is an irreducible representation (over $\Q(q)$) of the stabilizer
$\Gamma_\lambda$ of $\lambda$ in~$\Gamma$. The representation
corresponding to $(\lambda,\pi)$ is defined as follows. Consider the
irreducible $\Uq$-module $V_\lambda$ with a highest weight vector
$\xi_\lambda$ of weight $\lambda$. We have a representation
$\rho_\lambda$ of~$\Gamma_\lambda$ on $V_\lambda$ defined by
$\rho_\lambda(\gamma)X\xi_\lambda=\alpha_\gamma(X)\xi_\lambda$ for
$X\in\Uq$. The obvious representation of $\Uq$ on
$V_\lambda\otimes_{\Q(q)}H_\pi$ together with the representation
$\rho_\lambda\otimes\pi$ of $\Gamma_\lambda$ define an irreducible
representation of $\Uq\rtimes\Gamma_\lambda$. Finally, we induce
this representation to a representation of $\Uq\rtimes\Gamma$.

The statement about the braiding follows from the fact that the
universal $R$-matrix ${\mathcal R}_q$ is $\Gamma$-invariant. This,
in turn, can be checked either by looking at the explicit formula
for ${\mathcal R}_q$ or by using the characterization of ${\mathcal
R}_q$ by its action on the tensor product of lowest and highest
weight vectors.

Observe next that any irreducible representation of $\Gamma_\lambda$
over $\Q(q)$ has a unique up to isomorphism rational form. This can be seen either by noticing that the only nontrivial stabilizer groups we can get are~$S_2$ and~$S_3$, or by using the general fact that any
finite dimensional division algebra over $\Q$ remains a division
algebra after extending the scalars to $\Q(q)$. Therefore the simple
objects of $\CC_q$ are parametrized by the $\Gamma$-orbits of pairs
$(\lambda,\pi)$, where~$\lambda$ is a dominant weight in the root
lattice and $\pi$ is an irreducible representation of
$\Gamma_\lambda$ over $\Q$. The same set parametrizes the
irreducible finite dimensional representations of
$U(\g_\Q)\rtimes\Gamma$ over $\Q$ with weights in the root lattice.
It is clear that if we take the simple $\Uq\rtimes\Gamma$-module
$V'\in\CC_q$ corresponding to $(\lambda,\pi)$ and a
$\Uqres\rtimes\Gamma$-stable $\Qq$-lattice $M'\subset V'$, then the
specialization of~$M'$ to $q=1$ gives us a simple
$U(\g_\Q)\rtimes\Gamma$-module corresponding to $(\lambda,\pi)$.
This implies that the fusion rules in $\CC_q$ are the same as in the
representation category of $U(\g_\Q)\rtimes \Gamma$. We can also
conclude that if $V',V''\in\CC_q$ and $M'\subset V'$, $M''\subset
V''$ are $\Uqres\rtimes\Gamma$-stable $\Qq$-lattices, then
\begin{equation*}\label{eq:dim}
\dim_{\Q(q)}\Hom_{\Uq\rtimes\Gamma}(V',V'')
=\dim_\Q\Hom_{U(\g_\Q)\rtimes\Gamma}(M'_1,M''_1).
\end{equation*}

Now, if $W$ is a simple finite dimensional $U(\g_\Q)\rtimes\Gamma$
module with weights in the root lattice, then by
Proposition~\ref{prop:TKad} we know that the
$U\g\rtimes\Gamma$-module $\C\otimes_\Q W$ has a nonzero morphism
into $\g^{\otimes n}=\C\otimes_\Q(M_1^{\otimes n})$ for some $n$.
Hence $W$ embeds into $M_1^{\otimes n}$. By the above discussion it
follows that any simple object in $\CC_q$ embeds into $V^{\otimes
n}$ for some $n$.

Repeating the discussion at the end of the previous section, Proposition~\ref{prop:TKad} implies that the space
$\Hom_{U(\g_\Q)\rtimes\Gamma}(M_1^{\otimes m},M^{\otimes n}_1)$ has
a basis over $\Q$ consisting of compositions of morphisms
$\iota\otimes\dots\otimes\iota\otimes
f\otimes\iota\otimes\dots\otimes\iota$, where $f$ is the restriction
to the appropriate rational forms of one of the morphisms $\sigma$,
$B$, $B^*$, $L$, $L^*$. The same compositions of morphisms with $f$
replaced, resp., by $\sigma R_q$, $B_q$, $B'_q$, $L_q$, $L'_q$, are
then linearly independent over $\Q(q)$. Hence they span
$\Hom_{\Uq\rtimes\Gamma}(V^{\otimes m},V^{\otimes n})$. \ep

Fix a basis $(x_k)_k$ of the $\Qq$-lattice $M\subset V$. Let
$t_{kl}\in\Pol_q(G\rtimes\Gamma)$ be the matrix coefficients of the
representation of $\Uq\rtimes\Gamma$ on $V$ in this basis. In the
next theorem, which is our main result, we view~$R_q$ and~$L_q$ as matrices over~$\Qq$ with
respect to the bases $(x_k\otimes x_l)_{k,l}$ of $M\otimes_{\Qq}M$
and $(x_k)_k$ of $M$.

\begin{theorem}\label{mainth}
The $\Q(q)$-algebra $\Pol_q(G\rtimes\Gamma)$ is generated by the
coefficients of the matrix $T=(t_{kl})_{k,l}$, and the following is
a complete set of relations:
\begin{gather}
R_qT_1T_2=T_2T_1R_q,\label{eq1}\tag{R1}\\
A_q^{-1}T^tA_qT=1=TA_q^{-1}T^tA_q,\label{eq2}\tag{R2}\\
L_qT_1T_2=TL_q,\label{eq3}\tag{R3}
\end{gather}
where $A_q$ is the matrix $(B_q(x_k\otimes x_l))_{k,l}$.
\end{theorem}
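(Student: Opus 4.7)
The approach is based on the standard categorical principle that $\Pol_q(G\rtimes\Gamma)$, being the coend of the forgetful fiber functor on $\CC_q$, admits a presentation whose generators are the matrix coefficients of any tensor generator of $\CC_q$ and whose defining relations are the intertwining conditions for any generating set of morphisms (see~\cite[Corollary~2.3.13]{Sc}). Proposition~\ref{prop:TKad2} supplies exactly such data: $V$ generates $\CC_q$, and the morphisms $\sigma R_q$, $B_q$, $B'_q$, $L_q$, $L'_q$ generate all morphism spaces. The plan is therefore to translate each of these five intertwining conditions into a matrix relation in the $t_{kl}$, and then to reduce the resulting list of five relations to the three relations in the statement.

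Morphism by morphism, the translation is standard. The intertwining condition for the braiding $\sigma R_q$ takes the matrix form $R_q T_1 T_2 = T_2 T_1 R_q$, i.e.,~(R1). Invariance of $B_q$ becomes $T^t A_q T = A_q$, i.e., the left half of~(R2). Invariance of $B'_q$, whose matrix is $A_q^{-1}$ by the snake identity for self-duality of $V$, becomes $T A_q^{-1} T^t = A_q^{-1}$, i.e., the right half of~(R2). Covariance of the bracket $L_q$ gives~(R3). Covariance of $L'_q$ would yield one further matrix relation, which I will call~(R3'), of the form $T_1 T_2 L'_q = L'_q T$.

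The main remaining step is to show that~(R3') is already a consequence of~(R1)--(R3), so that $L'_q$ contributes no new relation. For this, introduce the auxiliary morphism
\[
\widetilde L_q := (\iota\otimes L_q)\circ(B'_q\otimes\iota)\colon V \to V\otimes V,
\]
which is $\Uq\rtimes\Gamma$-linear by construction. The uniqueness lemma preceding Proposition~\ref{prop:TKad2} states that the space of $\Gamma$-equivariant $\g$-linear morphisms $\g\otimes\g\to\g$ is one-dimensional; by self-duality of $\g$ the same is true for $\Gamma$-equivariant morphisms $\g\to\g\otimes\g$, and by Lemma~\ref{lem:lift} this one-dimensionality persists for $\Gamma$-equivariant $\Uq$-linear maps $V\to V\otimes V$. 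Hence $\widetilde L_q = c\,L'_q$ for a scalar $c\in\Q(q)$; a short contraction with $B_q$ shows $c\ne 0$, for otherwise $L_q$ itself would vanish. The matrix relation for $\widetilde L_q$ is then an immediate algebraic consequence of~(R3) (for $L_q$) and the right half of~(R2) (for $B'_q$), and hence so is~(R3').

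Combining the steps: (R1)--(R3) hold in $\Pol_q(G\rtimes\Gamma)$ by the translations above, while the free $\Q(q)$-algebra on the $t_{kl}$ modulo (R1)--(R3) satisfies the intertwining conditions for all five generating morphisms, so it surjects onto $\Pol_q(G\rtimes\Gamma)$; the surjection is therefore an isomorphism. The principal obstacle is the redundancy argument of paragraph three: verifying that the relation contributed by $L'_q$ is implied by (R1)--(R3). This rests on the $\Gamma$-equivariance of all morphisms involved — essential for uniqueness when $\g=\sln(\C)$ with $n\ge 3$ — and on identifying $L'_q$, up to a nonzero scalar, as a rotation of $L_q$ through the self-duality pairings $B_q$, $B'_q$.
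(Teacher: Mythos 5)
Your proposal is correct and follows the paper's own route: derive the five relations from Proposition~\ref{prop:TKad2} via the coend presentation of $\Pol_q(G\rtimes\Gamma)$, identify the $B_q$- and $B'_q$-relations with \eqref{eq2}, and show that the $L'_q$-relation is redundant. The only (inessential) difference is in the last step: the paper realizes $L'_q$ as the adjoint of $L_q$ with respect to the invariant forms $B_q^{(2)}$ and $B_q$ and formally takes adjoints of \eqref{eq3}, whereas you realize $L'_q$, up to a nonzero scalar, as the composite $(\iota\otimes L_q)(B'_q\otimes\iota)$; both arguments rest on the same one-dimensionality of $\Hom_{\Uq\rtimes\Gamma}(V,V\otimes V)$ and on rotating $L_q$ through the duality data.
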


\bp As we already discussed in the previous section, since the category $\CC_q$ is semisimple, the algebra of matrix
coefficients of $\Uq\rtimes\Gamma$-modules in $\CC_q$ is nothing
but the coend of the forgetful fiber functor on~$\CC_q$. Hence, by Proposition~\ref{prop:TKad2}, we conclude that
$\Pol_q(G\rtimes\Gamma)$ is generated by the elements $t_{kl}$ and a
complete set of relations is given by \eqref{eq1}, \eqref{eq3} and
\begin{gather}
B_qT_1T_2=B_q,\ \ T_1T_2B_q'=B_q',\label{eq2'}\tag{R2$'$}\\
T_1T_2L_q'=L_q'T,\label{eq4'}\tag{R4}
\end{gather}
where we again view $B'_q$ and $L'_q$ as matrices.

In terms of the matrix $A_q$ the first relation in \eqref{eq2'}
reads as $T^tA_qT=A_q$. Since up to a factor from $\Q(q)$ the
morphism $B'_q$ is given by
$$
1\mapsto \sum_k x^k\otimes x_k= \sum_{k,l}(A^{-1}_q)_{kl}x_k\otimes
x_l,
$$
the second relation in \eqref{eq2'} reads as
$TA^{-1}_qT^t=A^{-1}_q$. Thus relations \eqref{eq2} and \eqref{eq2'}
are equivalent.

It remains to show that \eqref{eq4'} follows from relations
\eqref{eq1}-\eqref{eq3}. For this we use the following description of~$L'_q$.
Equip $V\otimes_{\Q(q)}V$  with the nondegenerate invariant form
$$
B_q^{(2)}(x\otimes y,z\otimes w)=B_q(x\otimes w)B_q(y\otimes z).
$$
Then consider the adjoint $L^\dagger_q$ of $L_q$ with respect to the
invariant forms $B^{(2)}_q$ and $B_q$ on $V\otimes_{\Q(q)} V$ and
$V$, so $B^{(2)}_q(L^\dagger_qx,y\otimes z)=B_q(x\otimes L_q(y\otimes
z))$. In the classical case this would give us exactly the Hermitian
adjoint~$L^*$ we considered before. Hence by multiplying $L^\dagger_q$ by
a polynomial $f\in\Qq$ with value $1$ at $q=1$, we can get a
morphism $L'_q\colon M\to M\otimes_\Qq M$ such that its specialization to $q=1$ equals $L^*|_{\g_\Q}$. Since $B_q$ is $\Gamma$-equivariant, $L'_q$ is $\Gamma$-equivariant as well, so this is the required morphism.

Now, by taking adjoints (with respect to the forms $B^{(2)}_q$ and
$B_q$) of both sides of \eqref{eq3} we get
$$
(A^{-1}T^tA)_2(A^{-1}T^tA)_1L'_q=L'_q A^{-1}T^tA,
$$
where we used that the adjoint of an operator $C$ on $V$ is
$A^{-1}C^tA$ in our basis, and then that the adjoints of $T_1$ and
$T_2$ are $(A^{-1}T^tA)_2$ and $(A^{-1}T^tA)_1$, resp. In view of
\eqref{eq2}, the above identity is equivalent to~\eqref{eq4'}. \ep

Next we want to relax relation \eqref{eq2} to invertibility of $T$. This is possible by the following general result.

\begin{proposition}\label{prop:antipode}
Assume $\U$ is a quasitriangular Hopf algebra with invertible antipode and $R$-matrix $\RR$, and~$V$ is a finite dimensional $\U$-module. Fix a basis $v_1,\dots,v_n$ in $V$ and consider a universal unital algebra~$\A$ with generators~$t_{ij}$ and~$\tilde t_{ij}$ and relations
$$
RT_1T_2=T_2T_1R,\ \ T\tilde T=1=\tilde T T,
$$
where $T=(t_{ij})_{i,j}$, $\tilde T=(\tilde t_{ij})_{i,j}$, and $R$ is the matrix of the operator $\RR$ on $V\otimes V$ in the basis $(v_i\otimes v_k)_{i,k}$. Then $\A$ is a Hopf algebra with coproduct $\Delta(t_{ij})=\sum_k t_{ik}\otimes t_{kj}$, $\Delta(\tilde t_{ij})=\sum_k \tilde t_{kj}\otimes \tilde t_{ik}$ and antipode
$$
S(T)=\tilde T,\ \ S(\tilde T)=uTu^{-1},
$$
where $u$ is the Drinfeld element $m(S\otimes\iota)(\RR_{21})$ acting on $V$.
\end{proposition}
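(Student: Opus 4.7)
The strategy is to verify three things in sequence: (i) the proposed $\Delta$ and the counit $\varepsilon(t_{ij}) = \varepsilon(\tilde t_{ij}) = \delta_{ij}$ descend to algebra homomorphisms on $\A$, making it a bialgebra; (ii) the proposed $S$ descends to an algebra anti-homomorphism on $\A$; (iii) $S$ satisfies the antipode identity. Since $\A$ is presented by generators and relations, each descent amounts to checking that the map kills every defining relation, and once the maps exist the coassociativity, counit, and antipode axioms need only be verified on generators, where they will follow directly from the formulas.

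For (i), compatibility of $\Delta$ with $RT_1T_2 = T_2T_1R$ is the classical FRT calculation. Compatibility with $T\tilde T = \tilde TT = 1$ requires the precise swapped-index form $\Delta(\tilde t_{ij}) = \sum_k \tilde t_{kj}\otimes\tilde t_{ik}$: a direct expansion of $\Delta(T\tilde T)_{ij} = \sum_{k,p,q}t_{ip}\tilde t_{qj}\otimes t_{pk}\tilde t_{kq}$, with the inner sum contracted via $\sum_k t_{pk}\tilde t_{kq} = \delta_{pq}$, yields $\delta_{ij}\otimes 1$, and the mirror case is analogous.

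The main obstacle is (ii). Since $R$ is scalar, applying the anti-homomorphism $S$ to the FRT relation yields $R\tilde T_2\tilde T_1 = \tilde T_1\tilde T_2 R$; this holds in $\A$ as the image of the FRT relation under matrix inversion in $M_{n^2}(\A)$, using $T\tilde T = 1$. Applying $S$ once more yields the analogous FRT relation for $uTu^{-1}$, which holds thanks to the $\Ad(u\otimes u)$-invariance of $\RR$ (a consequence of $(S\otimes S)(\RR)=\RR$ together with $S^2 = \Ad(u)$, giving $(u\otimes u)R = R(u\otimes u)$ in $\End(V\otimes V)$). The critical and hardest check is that $S$ preserves $T\tilde T = 1$; after unwinding the anti-multiplicativity this becomes the identity
\begin{equation*}
\sum_k (uTu^{-1})_{kj}\tilde t_{ik} = \delta_{ij}\quad\text{in }\A,
\end{equation*}
together with its mirror. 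Crucially, this is the same as the antipode axiom $m(S\otimes\iota)\Delta(\tilde t_{ij}) = \varepsilon(\tilde t_{ij})$ with the stated formula for $\Delta(\tilde t_{ij})$, so once established it also closes (iii) on $\tilde t_{ij}$; on $t_{ij}$ the antipode axiom is the defining relation itself. To prove the identity, one substitutes the explicit expression $u = m(S\otimes\iota)(\RR_{21})$ acting on $V$ to rewrite $(uTu^{-1})_{kj}$ as a sum of $R$-matrix entries multiplying entries of $T$, then transports the $t$-factors past $\tilde t_{ik}$ using the FRT relation and its matrix-inverse consequences, and collapses the resulting sum via $\tilde TT = 1$; conceptually, this reflects the standard fact that the FRT bialgebra together with a formal matrix inverse of $T$ becomes a Hopf algebra whenever $R$ comes from a quasitriangular Hopf algebra, with the second-antipode formula necessarily being $\Ad(u)$.
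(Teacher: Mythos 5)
Your overall architecture coincides with the paper's: the coproduct check is routine; the well-definedness of $S$ as an anti-homomorphism reduces to the single identity $\sum_k (uTu^{-1})_{kj}\tilde t_{ik}=\delta_{ij}$ together with its mirror; and that same identity simultaneously yields the antipode axiom on the generators $\tilde t_{ij}$, while on $t_{ij}$ the axiom is just $T\tilde T=\tilde TT=1$. Two minor remarks on your step (ii): the relation $R\tilde T_2\tilde T_1=\tilde T_1\tilde T_2R$ does follow by conjugating the FRT relation by $\tilde T_1,\tilde T_2$, as you say; but your subsequent check of an ``FRT relation for $uTu^{-1}$'' is not needed for well-definedness, since no defining relation of $\A$ involves $\tilde T$ quadratically --- only $T\tilde T=\tilde TT=1$ has to be preserved by $S$.

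The genuine gap is that the one identity everything hinges on is asserted, not proved. You correctly isolate $\sum_k(uTu^{-1})_{kj}\tilde t_{ik}=\delta_{ij}$ as the crux, but then offer only a recipe (``substitute $u$, transport $t$-factors past $\tilde t$ via FRT, collapse via $\tilde TT=1$'') plus an appeal to the ``standard fact'' that FRT with a formal inverse of $T$ is a Hopf algebra with second antipode $\Ad(u)$ --- which is circular, since that fact \emph{is} the proposition. Moreover the computation does not close by a mechanical collapse via $\tilde TT=1$; it is exactly where the quasitriangular structure enters. The paper's route: from the defining relations derive the exchange relation $T_{23}^{-1}R_{12}T_{13}=T_{13}R_{12}T_{23}^{-1}$ in $\End(V)\otimes\End(V)\otimes\A$; multiply by $\RR^{-1}=(S\otimes\iota)(\RR)$; apply the twisted multiplication $m_{21}$ so that the alternative expression $u=m(\iota\otimes S^{-1})(\RR_{21})$ produces the Drinfeld element; and finally multiply by $\RR$ again to isolate $u\otimes 1$ and conclude $\sum_{i,k}c_i'uc_k\otimes t_kt_i'=u\otimes 1$. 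Each step uses a specific identity ($(S\otimes\iota)(\RR)=\RR^{-1}$, the two formulas for $u$) that your sketch never invokes. Until this computation is actually carried out, the proof is incomplete at its decisive point.
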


\bp
It is easy to see that the coproduct is well-defined. The opposite algebra is defined by the relations $RT_2T_1=T_1T_2R$ and $\tilde T^tT^t=1=T^t\tilde T^t$. From this we see that in order to prove that $S$ is a well-defined anti-homomorphism, it suffices to check that $(uTu^{-1})^t$ and $\tilde T^t$ are inverse to each other in $\A$. This will also imply that $S$ is indeed an antipode, since both $T$ and $\tilde T^t$ are corepresentations of $(\A,\Delta)$ and their entries generate $\A$.

Viewing $T$ and $T^{-1}=\tilde T$ as elements of $\End(V)\otimes\A$ rather than as matrices, say, $T=\sum_k c_k\otimes t_k$, $T^{-1}=\sum_i c_i'\otimes t_i'$, we thus have to check that
\begin{equation}\label{eq5}
\sum_{i,k} c_i'uc_ku^{-1}\otimes t_kt_i'=1=\sum_{i,k} uc_ku^{-1}c_i'\otimes t_i't_k.
\end{equation}

For this purpose take the relation $T_{23}^{-1}R_{12}T_{13}=T_{13}R_{12}T_{23}^{-1}$. Thus, if $\RR=\sum_ja_j\otimes b_j$, we have
$$
\sum_{i,j,k} a_jc_k\otimes c_i'b_j\otimes t_i't_k=\sum_{i,j,k} c_ka_j\otimes b_jc_i'\otimes t_kt_i'\ \ \text{in}\ \ \End(V)\otimes \End(V)\otimes\A.
$$
Since $\RR^{-1}=\sum_j S(a_j)\otimes b_j$, we have $\sum_{l,j} S^{-1}(a_l)a_j\otimes b_jb_l=1$, hence we get
$$
\sum_{i,k} c_k\otimes c_i'\otimes t_i't_k=\sum_{i,j,k,l} S^{-1}(a_l)c_ka_j\otimes b_jc_i'b_l\otimes t_kt_i'.
$$
Let us apply the operation $m_{21}$ to both sides of this identity. We get
$$
1=\sum_{i,j,k,l}  b_jc_i'b_lS^{-1}(a_l)c_ka_j\otimes t_kt_i',
$$
i.e., since $u=m(\iota\otimes S^{-1})(\RR_{21})$,
$$
1=\sum_{i,j,k}  b_jc_i'uc_ka_j\otimes t_kt_i'.
$$
Therefore, using that $\sum_{l,j} a_jS^{-1}(a_l)\otimes b_lb_j=1$, we get
$$
\sum_{i,k}  c_i'uc_k\otimes t_kt_i'=\sum_l b_lS^{-1}(a_l)\otimes1=u\otimes1,
$$
which gives the first identity in \eqref{eq5}.  The second identity is proved similarly, by starting with the relation $T_{23}R_{12}^{-1}T_{13}^{-1}=T_{13}^{-1}R_{12}^{-1}T_{23}$ and using that $u^{-1}=m(S^{-1}\otimes\iota)(\RR_{21}^{-1})$.
\ep

\begin{remark} \label{rem:Hopf}
The proof shows that the only properties of the invertible operator $R$ that we
need are that both $R$ and $R^{-1}$ are invertible in
$\End(V)^{\mathrm{op}}\otimes\End(V)$, and if $\sum_i{p_i\otimes
q_i}$ and $\sum_j r_j\otimes s_j$ are these inverses, then
$u=\sum_iq_ip_i$ and $v=\sum_js_jr_j$ are invertible in $\End(V)$.
Then the antipode is given by $S(T)=T^{-1}$,
$S(T^{-1})=uTu^{-1}=v^{-1}Tv$.
\end{remark}

We are now ready to prove the second version of our main result.

\begin{theorem}\label{mainth1a}
In the notation of Theorem~\ref{mainth}, the $\Q(q)$-algebra $\Pol_q(G\rtimes\Gamma)$ is generated by the
coefficients of $T$ and $T^{-1}$, and a complete set of relations is given by
$$
R_qT_1T_2=T_2T_1R_q,\ \ L_qT_1T_2=TL_q.
$$
\end{theorem}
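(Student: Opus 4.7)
My plan is to prove injectivity of the tautological surjection $\A\twoheadrightarrow\Pol_q(G\rtimes\Gamma)$, where $\A$ denotes the $\Q(q)$-algebra presented as in the theorem. By Theorem~\ref{mainth}, this is equivalent to deriving the orthogonality relation~\eqref{eq2} from the defining relations of~$\A$.

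First I would upgrade $\A$ to a Hopf algebra. By Proposition~\ref{prop:antipode} applied with $\U=\Uq$ and $R=R_q$, the subalgebra $\A_0$ defined only by~\eqref{eq1} and $TT^{-1}=T^{-1}T=1$ is already a Hopf algebra with $S(T)=T^{-1}$. I would then verify that the two-sided ideal $J\subset\A_0$ generated by the matrix entries of $L_qT_1T_2-TL_q$ is a Hopf ideal: $\epsilon(J)=0$ is immediate; $\Delta(J)\subset J\otimes\A_0+\A_0\otimes J$ follows from a direct index computation using $\Delta(T)=T\otimes T$; and $S(J)\subset J$ reduces to the identity $T^{-1}L_q=L_q(T_1T_2)^{-1}$, obtained from~\eqref{eq3} by left-multiplying by $T^{-1}$ and right-multiplying by $(T_1T_2)^{-1}$. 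Hence $\A=\A_0/J$ is a Hopf algebra.

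The key step is to show $B_q\in\Hom_\A(V\otimes V,\Q(q))$, which in matrix form is precisely $T^tA_qT=A_q$, the first half of~\eqref{eq2}. The category $\Rep\A$ of finite-dimensional $\A$-comodules is rigid (since $\A$ is Hopf), it carries a braiding $\sigma R_q$ on $V\otimes V$ by~\eqref{eq1}, and it contains $L_q\in\Hom_\A(V\otimes V,V)$ by~\eqref{eq3}. I would pass via rigidity to the mate $\hat L_q\in\Hom_\A(V,V\otimes V^*)$ and compose
\[
V\otimes V\xrightarrow{\hat L_q\otimes\hat L_q}V\otimes V^*\otimes V\otimes V^*\xrightarrow{\mathrm{id}\otimes\mathrm{ev}\otimes\mathrm{id}}V\otimes V^*\xrightarrow{\mathrm{qtr}}\Q(q),
\]
where $\mathrm{qtr}$ is the quantum trace built from $\mathrm{ev}$ and the braiding, hence an $\A$-morphism. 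This yields a morphism $B_q^K\in\Hom_\A(V\otimes V,\Q(q))$: a quantum analogue of the Killing form $\Tr(\ad X\circ\ad Y)$. Since $V$ is simple and self-dual, $\dim_{\Q(q)}\Hom_{\Uq}(V\otimes V,\Q(q))=1$, so $B_q^K=cB_q$ for some $c\in\Q(q)$; specializing at $q=1$ returns the classical (nondegenerate) Killing form of $\g_\Q$, forcing $c\ne0$, and hence $B_q\in\Hom_\A$ as needed. The dual relation $TA_q^{-1}T^t=A_q^{-1}$ follows either from the analogous Killing-form construction applied to $B_q'$, or by applying the antipode $S$ to the first relation.

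The hardest part will be this last step: constructing $\mathrm{qtr}$ as a genuine $\A$-morphism from the braided rigid Hopf structure of $\Rep\A$, and verifying that its $q=1$ specialization is the ordinary trace, so that the scalar $c$ is nonzero.
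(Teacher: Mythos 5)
Your proposal is correct and follows essentially the same route as the paper: pass to the Hopf algebra structure via Proposition~\ref{prop:antipode}, reduce everything to showing that $B_q$ is a comodule morphism (equivalently $T^tA_qT=A_q$), and obtain this by building a quantum Killing form from $L_q$ and the duality/quantum-trace morphisms, then using $\dim\Hom_{\Uq}(V\otimes V,\Q(q))=1$ and nonvanishing at $q=1$. The step you flag as hardest is handled in the paper simply by taking the evaluation $e(x\otimes f)=f(ux)$ with $u$ the Drinfeld element, which is a comodule morphism precisely because $S^2(T)=uTu^{-1}$ in the presented Hopf algebra.
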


\bp
Consider a universal algebra $\OO$ with generators and relations as in the formulation. In other words, $\OO$~is the quotient of the Hopf algebra $\A$ from Proposition~\ref{prop:antipode} by the additional relation $L_qT_1T_2=TL_q$. It is immediate that both the comultiplication and the antipode pass to the quotient, so $\OO$ is a Hopf algebra with antipode $S(T)=T^{-1}$, $S(T^{-1})=uTu^{-1}$, where $u$ is the Drinfeld element of $U_q\g$ acting on $V$. (Note that $U_q\g$ is not, strictly speaking, quasitriangular, as its $R$-matrix lives only in a completion of $U_q\g\otimes_{\Q(q)}U_q\g$, but the proof of Proposition~\ref{prop:antipode} goes through without any change; see also Remark~\ref{rem:Hopf}.)

The matrix $T$ defines a right $\OO$-comodule structure on $V$. To prove the theorem, it suffices to show that $B_q\colon V\otimes_{\Q(q)}V\to\Q(q)$ is a morphism of $\OO$-comodules, since, as we already used in the proof of Theorem~\ref{mainth}, this is equivalent to the identity $T^tA_qT=A_q$.

Consider the dual $\OO$-comodule $V^*$. We have the following standard morphisms of comodules:
\begin{align*}
& e\colon V\otimes_{\Q(q)}V^*\to \Q(q),\ \ x\otimes f\mapsto f(ux),\\
& i\colon \Q(q)\to V\otimes_{\Q(q)}V^*, \ \ 1\mapsto\sum_i x_i\otimes x^i,
\end{align*}
where $(x_i)_i$ is a basis in $V$ and $(x^i)_i$ is the dual basis. We can then define a morphism of $\OO$-comodules
$$
\tilde B_q=e(L_q\otimes\iota)(\iota\otimes L_q\otimes\iota)(\iota\otimes\iota\otimes i)\colon V\otimes_{\Q(q)}V\to\Q(q).
$$
Since $\Pol_q(G\rtimes\Gamma)$ is a quotient of $\OO$, this is also a morphism of $(U_q\g\rtimes\Gamma)$-modules. Hence it coincides with~$B_q$ up to a scalar factor. Therefore we only need to check that $\tilde B_q\ne0$.

For this we consider the specialization of $\tilde B_q$ to $q=1$. More precisely, recall that we fixed a $\Qq$-lattice $M$ in $V$ and identified $M_1$ with $\g_\Q$. As a lattice in $V^*$ we take $M^*=\Hom_{\Qq}(M,\Qq)$. Its specialization to $q=1$ is $\g_\Q^*=\Hom_\Q(\g_\Q,\Q)$. The morphism $\tilde B_q$ restricts to a morphism $M\otimes_\Qq M\to\Qq$ and this restriction is defined in exactly the same way as $\tilde B_q$, but using $M$ and $M^*$ instead of~$V$ and~$V^*$. Specializing this restriction to $q=1$ and using that then $L_q$ becomes the Lie bracket, while $u$ becomes equal to $1$, we see that the restriction of $\tilde B_q$ becomes the Killing form $X\otimes Y\mapsto\Tr((\ad X)(\ad Y))$ on $\g_\Q$.
\ep

\section{Extension to all \texorpdfstring{$q$}{q} except finitely many points}\label{s4}

As in the previous section, assume that $G$ is a connected simple complex Lie group of adjoint type. Denote the algebra $\Qq$ by $A$ and consider the quantized algebra of functions $\Pol_q^A(G)\subset\Pol_q(G)$ over~$A$ defined by Lusztig~\cite{L3}. We can then consider the subalgebra
$$
\Pol_q^A(G\rtimes\Gamma)=\Pol_q^A(G)\otimes_\Q\Pol(\Gamma;\Q)\subset\Pol_q(G\rtimes\Gamma)
$$
and specialize it to any nonzero $q_0\in\C$. We consider these specialized algebras over $\C$. Thus, we take the homomorphism $A\to\C$, $q\mapsto q_0$, and put
$$
\Pol_{q_0}(G\rtimes\Gamma)=\C\otimes_A \Pol_q^A(G\rtimes\Gamma).
$$
Note that it follows from the definition of $\Pol_q^A(G)$ in~\cite{L3} that for $q_0$ not a nontrivial root of unity the algebra $\Pol_{q_0}(G\rtimes\Gamma)$ can be defined in the same way as in the previous section but working over $\C$ instead of $\Q(q)$.

\begin{theorem}\label{mainth2}
Theorem~\ref{mainth1a} holds over $\C$ for all numerical values of $q$ except finitely many algebraic numbers. In fact, the following stronger statement holds: there is a polynomial $f\in\Q[q]$ with $f(1)\ne0$ such that for the ring $B=\Q[q,q^{-1},1/f]$ the $B$-algebra $\Pol_q^B(G\rtimes\Gamma)\subset\Pol_q(G\rtimes\Gamma)$ is described by the generators and relations as in Theorem~\ref{mainth1a}.
\end{theorem}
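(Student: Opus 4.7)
The plan is to invoke the Artin--Small--Zhang generic freeness theorem \cite{ASZ}. Set $A=\Qq$ and let $\OO^A$ be the universal $A$-algebra on generators $\{t_{ij},\tilde t_{ij}\}$ subject to the relations of Theorem~\ref{mainth1a}; this makes sense because the entries of $R_q$ and $L_q$ lie in $A$. Let $\varphi\colon\OO^A\to\Pol_q^A(G\rtimes\Gamma)$ be the canonical $A$-algebra map sending the generators to the matrix coefficients of the representation on the $A$-lattice $M$ (in the basis $(x_k)_k$) and their inverses. Surjectivity of $\varphi$ will follow from the fact that $\Pol_q^A(G\rtimes\Gamma)$ is generated by the matrix coefficients of $V$ and $V^*$, while those of $V^*$ are expressible in terms of the entries of $T^{-1}$ by means of the integral ``Killing'' matrix $A_q=(B_q(x_k\otimes x_l))_{k,l}$ (whose classical limit is the invertible Killing form, hence $\det A_q\in A$ is invertible after possibly further localization). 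Set $I=\ker\varphi$; by Theorem~\ref{mainth1a}, $I\otimes_A\Q(q)=0$.

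The key step is to show $I[f_0^{-1}]=0$ for some $0\ne f_0\in A$. I would equip $\OO^A$ with the ascending filtration by total degree in the generators. Relations~\eqref{eq1} and $T\tilde T=1=\tilde T T$ are homogeneous quadratic and descend to $\mathrm{gr}\,\OO^A$; the inhomogeneous relation~\eqref{eq3} yields $L_qT_1T_2=0$ in $\mathrm{gr}\,\OO^A$ after taking leading terms. The resulting graded algebra is a quotient of a Faddeev--Reshetikhin--Takhtadzhyan-type quantum matrix bialgebra with inverses, which is strongly noetherian over $A$, and so $\OO^A$ itself is strongly noetherian over $A$ by the standard filtered-to-graded lifting. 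The ASZ theorem, applied to $\OO^A$ regarded as a cyclic module over itself, yields $f_0\in A\setminus\{0\}$ making $\OO^A[f_0^{-1}]$ a free $A[f_0^{-1}]$-module. Combined with the freeness of $\Pol_q^A(G\rtimes\Gamma)$ over $A$ (Lusztig's construction provides an $A$-basis), the short exact sequence
$$
0\to I[f_0^{-1}]\to\OO^A[f_0^{-1}]\to\Pol_q^A(G\rtimes\Gamma)[f_0^{-1}]\to 0
$$
splits as a sequence of $A[f_0^{-1}]$-modules, so $I[f_0^{-1}]$ is a direct summand of a free $A[f_0^{-1}]$-module, in particular torsion free. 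Since it also vanishes at the generic point of $\mathrm{Spec}\,A$, we conclude $I[f_0^{-1}]=0$.

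To obtain the stronger statement $f(1)\ne0$, I would observe that specializing the presentation to $q=1$ recovers the classical presentation of $\Pol(G\rtimes\Gamma)$: $R_q$ becomes $1\otimes 1$, so~\eqref{eq1} becomes commutativity of the $t_{ij}$; \eqref{eq3} becomes $LT_1T_2=TL$ (preservation of the Lie bracket); invertibility of $T$ is imposed by construction; and preservation of $B$ is deduced from these by running the proof of Theorem~\ref{mainth1a} at $q=1$ (with the Drinfeld element $u$ specializing to $1$). This reproduces Proposition~\ref{prop:TKad}, so $\varphi$ is an isomorphism modulo $q-1$, i.e.\ $q=1\notin\mathrm{supp}(I)\subset\mathrm{Spec}\,A$. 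Since $\mathrm{supp}(I)\subset V(f_0)$ is a finite set avoiding $\{q=1\}$, there is a polynomial $f\in\Q[q]$ with $f(1)\ne0$ vanishing on this support; for such $f$, $I\otimes_A B=0$ and the theorem follows.

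\textbf{The main obstacle.} The technical crux is verifying that $\OO^A$ is strongly noetherian over $A$, i.e.\ that its associated graded algebra belongs to the class of FRT-type quantum matrix bialgebras known to be strongly noetherian in the generality we need (the $R$-matrix attached to the adjoint representation of an arbitrary simple quantum group, rather than just the classical series). Once this technical input is in place, the rest of the argument is formal manipulation of ASZ generic freeness combined with classical-limit analysis.
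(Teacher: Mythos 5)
Your strategy is the same as the paper's: form the universal algebra $\OO^A$ over $A=\Qq$, use Artin--Small--Zhang generic freeness together with the generic vanishing of the kernel (Theorem~\ref{mainth1a}) to kill the kernel after inverting one polynomial, and then use the $q=1$ specialization to arrange $f(1)\ne0$. The technical input you flag as the main obstacle --- strong noetherianity of $\OO^A$ via a degree filtration whose associated graded is an FRT-type algebra --- is exactly how the paper proceeds; it is supplied by \cite[Proposition~I.8.17]{BG} (stated there for numerical $q$ but valid verbatim over $A$), so this is a citation gap rather than a mathematical one. (The paper applies ASZ to the finitely generated $\OO^A$-module $\mathcal K=\ker\psi$ rather than to $\OO^A$ itself; both variants work.)

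There is, however, one genuine gap in your last step. From ``$\varphi$ is an isomorphism modulo $q-1$'' you conclude ``$q=1\notin\operatorname{supp}(I)$.'' What the mod-$(q-1)$ reduction actually gives (after checking $\operatorname{Tor}^1_A(A/(q-1),\Pol_q^A(G\rtimes\Gamma))=0$, which uses freeness of Lusztig's form over $A$) is $I/(q-1)I=0$. Since $I$ is finitely generated only over the noncommutative algebra $\OO^A$, not over $A$, Nakayama does not apply directly, and $I=(q-1)I$ does not by itself force $I_{(q-1)}=0$ (compare $M=\Q(q)$ as an $A$-module). The fix is to use what you already have: $I[1/f_0]=0$ plus noetherianity of $\OO^A$ gives $f_0^N I=0$, hence, writing $f_0=(q-1)^kg$ with $g(1)\ne0$, the module $I[1/g]$ is killed by a power of $q-1$; combined with $I=(q-1)I$ this iterates to $I[1/g]=0$. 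This nilpotency-plus-Nakayama argument is precisely what the paper's minimal-$N$ contradiction supplies, and without it the passage from the first statement of the theorem to the stronger one with $f(1)\ne0$ is incomplete. A similar (minor) care is needed to get surjectivity of $\varphi$ after inverting a polynomial not vanishing at $q=1$, which the paper obtains from finite generation of $\Pol_q^A(G\rtimes\Gamma)$ and the specialization of an $A$-basis at $q=1$.
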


Recall that when $q$ is an indeterminate, then, by the discussion before Theorem~\ref{mainth}, we view~$R_q$ and~$L_q$ as matrices over~$\Qq$ with
respect to bases $(x_k\otimes x_l)_{k,l}$ of $M\otimes_{\Qq}M$
and $(x_k)_k$ of $M$. These matrices can be specialized to any nonzero complex number, and it is these specializations that we use in Theorem~\ref{mainth2}. For finitely many roots of unity we might even get zero matrices, but this is allowed by the formulation of the theorem. When we take values different from nontrivial roots of unity, then the specializations are never zero, and we conjecture that Theorem~\ref{mainth2} is true for all such values.

\begin{proof}
It is clear that Theorem~\ref{mainth1a} holds for all transcendental values of $q$, since we then have an inclusion $\Q(q)\hookrightarrow\C$. To extend the result to all but finitely many algebraic numbers, we argue as follows.

The generators and relations of Theorem \ref{mainth1a} make sense over the ring $A$. Let $\OO^A$ be the algebra defined by these generators and 
relations. We have a natural algebra homomorphism $\psi\colon \OO^A\to \Pol_q^A(G\rtimes\Gamma)$. Since the algebra $\Pol_q^A(G\rtimes\Gamma)$ is finitely generated by \cite[Proposition~3.3]{L3}, the homomorphism $\psi$ becomes surjective after inverting a polynomial $f_1(q)$. Using the fact that the specialization of $\Pol_q^A(G\rtimes\Gamma)$ to $q=1$ maps an $A$-basis into a basis of $\Pol^\Q(G\rtimes\Gamma)$ it is easy to see that we may assume that $f_1(1)\ne 0$. (For similar reasons we can actually assume that the roots of $f_1$ are nontrivial roots of unity.)

Let $\mathcal{K}=\operatorname{Ker}\psi$. By the arguments in the proof of \cite[Proposition~I.8.17]{BG}, the algebra $\OO^A$ is $\mathbb N$-filtered, and $\operatorname{gr}(\OO^A)$ is locally finite and strongly left Noetherian (more precisely, that proposition is proved when~$q$ is specialized to a numerical value, but it generalizes verbatim to our setting). Hence $\mathcal{K}$ is a finitely generated $\OO^A$-module. Thus, it follows from \cite[Theorem 0.3]{ASZ} that $\mathcal{K}$ is generically free over $\operatorname{Spec}A$. In other words,~$\mathcal{K}$~becomes free after inverting some polynomial $f_2(q)$. But Theorem \ref{mainth1a} implies that $\mathcal{K}\otimes_A \Q(q)=0$. Hence $\mathcal{K}$ becomes zero after inverting $f_2(q)$.

This means that the map $\psi$ becomes an isomorphism after inverting $f=f_1f_2$. In particular, we get an isomorphism if we specialize $q$ to any nonzero value except the roots of~$f$.

It remains to show that $f$ can be chosen so that $f(1)\ne 0$, or equivalently, that the localization of $\mathcal{K}$ to some Zariski neighbourhood of $q=1$ is zero. Clearly, we can choose a polynomial $f$ such that $f(1)\ne 0$, $\psi$ is surjective after inverting~$f$, and $(q-1)^N\mathcal{K}[1/f]=0$ for some $N\ge0$. Pick the smallest such $N$.

Assume that $N>0$. Pick $v\in \mathcal{K}[1/f]$ such that $(q-1)^{N-1}v\ne 0$ (it exists since $N$ was chosen the smallest possible). Then $v\notin (q-1)\mathcal{K}[1/f]$, i.e., ${\mathcal K}[1/f]/(q-1)\mathcal{K}[1/f]={\mathcal K}/(q-1){\mathcal K}\ne 0$. On the other hand, let us reduce the short exact sequence
$$
0\to \mathcal{K}[1/f]\to \OO^A[1/f]\to \Pol_q^A(G\rtimes\Gamma)[1/f]\to 0
$$
modulo $q-1$. Since $\Pol_q^A(G\rtimes\Gamma)$ is a free $A$-module, we have ${\rm Tor}^1_A(A/(q-1)A,\Pol_q^A(G\rtimes\Gamma)[1/f])=0$. Hence we get a short exact sequence
$$
0\to \mathcal{K}/(q-1)\mathcal{K}\to \OO^A/(q-1)\to \Pol^\Q(G\rtimes\Gamma)\to 0.
$$
But $\Pol^\Q(G\rtimes\Gamma)$ is defined by the specialization of the relations of Theorem \ref{mainth1a} to $q=1$, hence ${\mathcal K}/(q-1){\mathcal K}=0$. This is a contradiction. Thus $N=0$ and $\mathcal{K}[1/f]=0$, as desired.
\end{proof}

\begin{corollary}\label{mainth3} Outside of finitely many values of $q$ (not including $1$), every braided tensor autoequivalence of the category $\mathcal C$ of comodules over $\Pol_q(G\rtimes \Gamma)$ is trivial.
\end{corollary}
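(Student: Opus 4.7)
The plan is to use the presentation from Theorem~\ref{mainth1a}, extended to almost all numerical $q$ via Theorem~\ref{mainth2}, to reduce the classification of braided tensor autoequivalences $F\colon\mathcal C\to\mathcal C$ to a short computation with scalar factors. Let $q_0$ lie outside the finite exceptional set of Theorem~\ref{mainth2} and avoid the (finitely many) nontrivial roots of unity one must exclude so that $\mathcal C_{q_0}$ is semisimple with the classical fusion ring of representations of $G\rtimes\Gamma$ with root-lattice weights. The object $V$ is then intrinsically characterized in this fusion ring as the adjoint object (e.g.\ as the unique nontrivial self-dual simple object of smallest Frobenius--Perron dimension that is a summand of its own tensor square), so any braided tensor autoequivalence $F$ satisfies $F(V)\cong V$. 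This step ultimately reflects the triviality of $\Out(G\rtimes\Gamma)$ for the adjoint-type groups under consideration, and I expect it to be the main obstacle: one must verify by hand that no other simple object has the combinatorial features of the adjoint, working through the Dynkin types if need be.

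Fix an isomorphism $\alpha\colon F(V)\to V$; the monoidal structure of $F$ extends it to compatible isomorphisms $\alpha_n\colon F(V^{\otimes n})\to V^{\otimes n}$, and transporting along these lets $F$ act on each Hom space between tensor powers of $V$. Being braided, $F$ fixes $\sigma R_q$ on the nose. By the uniqueness statements in Section~\ref{s3} --- each of the relevant $\Gamma$-equivariant Hom spaces is one-dimensional --- we get $F(L_q)=c_LL_q$, $F(B_q)=c_BB_q$, $F(L'_q)=c_{L'}L'_q$, $F(B'_q)=c_{B'}B'_q$ for scalars in $\C^\times$. Applying $F$ to the adjunction $B_q^{(2)}(L'_qx,y\otimes z)=B_q(x\otimes L_q(y\otimes z))$ used in the proof of Theorem~\ref{mainth}, and noting $F(B_q^{(2)})=c_B^2B_q^{(2)}$, yields $c_{L'}=c_L/c_B$. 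Both scalar endomorphisms $L_q\circ L'_q\in\End(V)=\C$ and $B_q\circ B'_q\in\End(\C)=\C$ are nonzero (at $q=1$ they specialize respectively to the Casimir eigenvalue on the adjoint and to $\dim\g$), hence are fixed by $F$, giving $c_Lc_{L'}=c_Bc_{B'}=1$, and thus $c_B=c_L^2$ and $c_{B'}=c_L^{-2}$. Replacing $\alpha$ by $\mu\alpha$ rescales $c_L\mapsto\mu^{-1}c_L$ and $c_B\mapsto\mu^{-2}c_B$, so choosing $\mu=c_L$ normalizes all four scalars to $1$.

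After this gauge fixing $F$ acts as the identity on each generating morphism, so by Proposition~\ref{prop:TKad2} it acts as the identity on $\Hom_{\mathcal C}(V^{\otimes m},V^{\otimes n})$ for all $m,n$. Since every simple object of $\mathcal C$ embeds into some $V^{\otimes n}$, the isomorphisms $\alpha_n$ assemble into a braided monoidal natural isomorphism $F\Rightarrow\mathrm{id}_{\mathcal C}$. Compatibility with the requirement $q_0\neq 1$ is guaranteed by Theorem~\ref{mainth2}, whose polynomial $f$ satisfies $f(1)\neq 0$, while the additional exclusion of nontrivial roots of unity in Step~1 only removes finitely many further algebraic $q_0$.
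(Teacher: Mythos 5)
Your overall strategy is the same as the paper's: use the presentation of Theorem~\ref{mainth2} to reduce a braided autoequivalence $F$ to its effect on $V$ and on the generating morphisms, observe that the relevant Hom spaces are one-dimensional so $F$ acts by scalars, and gauge these scalars away by rescaling $V\cong F(V)$. Your scalar bookkeeping ($c_{L'}=c_L/c_B$ from the adjunction, $c_Lc_{L'}=c_Bc_{B'}=1$ from the nonzero scalar composites, normalization by $\mu=c_L$) is correct and is in fact more explicit than the paper, which delegates this reduction to \cite[Proposition 3.12]{DEN} and only records the single scalar $\lambda$ with $F(L_q)=\lambda L_q$.

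There is, however, a genuine gap that undercuts the stated conclusion. You set up the argument only for $q_0$ such that $\mathcal C_{q_0}$ is ``semisimple with the classical fusion ring,'' and you assert that this excludes only finitely many further values. That is false: for \emph{every} nontrivial root of unity the category of comodules over Lusztig's form $\Pol_{q_0}(G\rtimes\Gamma)$ is not semisimple (tensor products of simples decompose into indecomposable non-simple tilting modules, etc.), so your hypothesis removes infinitely many values of $q$. Since the non-root-of-unity case is already covered by \cite{NT0}, the roots of unity are precisely the content of the corollary, and several of your steps lean on semisimplicity in an essential way: the fusion-ring characterization of $V$, the claim that every simple embeds into some $V^{\otimes n}$, and the final assembly of $F\Rightarrow\mathrm{id}$ from its action on $\Hom(V^{\otimes m},V^{\otimes n})$. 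The paper avoids this by working Hopf-algebraically: Theorem~\ref{mainth2} presents the Hopf algebra itself at all but finitely many $q$, and \cite[Proposition 3.12]{DEN} then says a braided tensor functor out of its comodule category is determined by the image of the generating comodule $V$ and of $L_q$, with no semisimplicity needed. Separately, your step $F(V)\cong V$ is only sketched: the proposed characterization of the adjoint object (which you rightly flag as needing the ``summand of its own square'' clause, e.g.\ to exclude the $5$-dimensional object for type $B_2$) is left unverified across the Dynkin types, and at roots of unity one cannot even speak of Frobenius--Perron dimensions of a fusion ring in the way you do; the paper handles this step by invoking the arguments of \cite[Theorem 4.4(ii)]{DEN}.
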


\begin{proof} Denote as before by $V$ a quantum deformation of the $\g$-module $\g$. It is not difficult to check using the arguments of the proof \cite[Theoreom 4.4(ii)]{DEN} that for every autoequivalence $F\colon \mathcal C\to \mathcal C$ one has $F( V)\cong V$. Also, by \cite[Proposition 3.12]{DEN}, Theorem~\ref{mainth2} implies that outside of finitely many values of $q$, every braided tensor functor out of $\mathcal C$ is determined by the image of $V$ and of the quantum Lie bracket $L_q\colon V\otimes V\to V$. Thus, every braided tensor functor $F\colon\mathcal C\to\mathcal C$ is determined by $\lambda\in \C^*$ such that $F(L_q)=\lambda L_q$. But by rescaling the isomorphism $V\cong F(V)$, we can make $\lambda=1$. This implies the corollary.
\end{proof}

This provides a new proof of \cite[Theorem~2.1]{NT0} (for all but finitely many $q$ not a nontrivial root of unity) and of \cite[Theorem~4.4(ii)]{DEN} for the groups $G_2$, $F_4$, $E_7^{\mathrm{ad}}$, $E_8$, for which $\Gamma=1$.
For $E_6$ there is a similar proof based on an analogue of Theorem \ref{mainth1a} representing quantum $E_6$ as the
symmetries of the unique (up to scaling) cubic form on the $27$-dimensional representation. Note that for the classical groups $SL(n)$, $SO(n)$, $Sp(2n)$, there is a direct argument in \cite[Theorem~4.6]{DEN} giving an even stronger statement, with a description of the set of forbidden values of $q$.

\bigskip

\end{document}